\newtheorem{theorem}{Theorem}[section]
\newtheorem{lemma}[theorem]{Lemma}
\newtheorem{corollary}[theorem]{Corollary}
\theoremstyle{definition}
\newtheorem{definition}[theorem]{Definition}
\newtheorem{example}[theorem]{Example}
\theoremstyle{remark}
\newtheorem{remark}[theorem]{Remark}
\numberwithin{equation}{section}
\begin{document}
	\setcounter{page}{1}
	
	\title[Zip shift space]{Zip shift space}
	
	\author[S. Lamei, P. Mehdipour]{Sanaz Lamei$^{*1}$, Pouya Mehdipour$^2$}
	
	\address{$^{1}$ Faculty of Mathematical sciences , University of Guilan,  P. O. Box 1914, Rasht, Iran.}
	\email{lamei@guilan.ac.ir}
	\address{$^{2}$ Departamento de MatemÃ¡tica,Universidade Federal de ViÃ§osa, DMA- Brazil.}
	\email{Pouya@ufv.br}
	
	
	\subjclass[2010]{Primary 37B10; Secondary 37D05, 37C29.}
	
	\keywords{ zip shift, endomorphim horseshoe, endomorphim homoclinic orbit.}
	
	\date{Received: xxxxxx; Revised: yyyyyy; Accepted: zzzzzz.
		\newline \indent $^{*}$ Corresponding author}
	\tableofcontents
	\begin{abstract}
		We introduce a new extension in symbolic dynamics on two sets of alphabets, called the zip shift space. In finite case, 
		it represents a finite-to-1 local homeomorphism called zip shift map. Such extension, offers a conjugacy between some
		endomorphisms and some zip shift map over two-sided space with finite sets of alphabets. As an application, the topological conjugacy
		of an N-to-1 uniformly hyperbolic horseshoe map with a zip shift map and its orbit structure is investigated. Moreover, the pre-image studies
		over zip shift space and the concepts of stable and unstable sets and homoclinic orbits, with a precise description for N-to-1 horseshoe are illustrated.
	\end{abstract} 
	\maketitle
	
	\section{Introduction}
	Symbolic dynamics is a powerful implement to verify a worth amount of dynamical properties in smooth and topological dynamics. It provides a discrete modeling of the system.
	Any state of such modeling is represented by finite (or infinite) set of alphabets. 
	The evolution of dynamics is displayed by the shift transformation over the space of one-sided  or two-sided sequences of configurations.
	
	The idea of symbolic dynamics goes back to the studies of Hadamard (1989) on geodesic flows of surfaces with negative curvature \cite{H2}. 
	After that the theory was further developed by the attempts of  mathematicians such as Morse, 
	Artin, Hedlund, Birkhoff, Levinson, Cartwright, Littlewood etc. \cite{A}, \cite{AF}, \cite{LM}.
	Around 1970, the two-sided symbolic dynamics and the Markov partition was developed for hyperbolic toral automorphisms and Axiom A
	diffeomorphisms (including Anosov diffeomorphisms), by  
	Adler, Weiss, Anosov, Sinai, and Bowen \cite{AF}, \cite{AW},  \cite{B1}, \cite{S2}. 
	In particular, R. Bowen used  symbolic dynamics and Markov partition   to study the Axiom A diffeomorphisms and it
	allowed the techniques of statistical mechanics to be applied to dynamical systems. Also S. Smale  
	developed a method to determine the chaotic behavior of dynamical systems by giving a symbolic representation 
	to the horseshoe map. Currently, this powerful method is improved in many areas of mathematics, such as ergodic theory,
	complex dynamics, partially hyperbolic diffeomorphisms, billiards, etc., and has significant applications in
	other research areas such as information theory, cellular automaton, statistical mechanics, quantum chaos,  thermodynamics, etc \cite{AA}, \cite{S1}, \cite{R}. 
	
	In the context of invertible dynamics, the existence of a Markov partition \cite{B1}, \cite{LM}
	provides a topological (or measure theoretical) conjugacy with some two-sided shift homeomorphism and offers convenient dynamical information. 
	In non-invertible case, 
	when the system admits a Markov partition, one 
	can associate a semi-conjugacy to a shift map over one-sided shift space. In addition, another semi-conjugacy can be obtained via the natural extension  (inverse limit) of the map
	over the (two-sided) orbit space. 
	It is noteworthy that recently,  E. Ermerson, Y. Lima and P. Mauricio 
	\cite{ELM}, used  the methods introduced in \cite{O1}, \cite{S}, to construct countable Markov partitions for the natural extension of some non-invertible 
	non-uniformly hyperbolic systems.
	This construction allowed to verify some ergodic and topological properties compatible with the inverse limit of the original map.
	In fact, there is lots of important data, which is accessible only for the natural extension of the map,
	and not necessarily for the original dynamics. 
	To the best of our knowledge, 
	outside the natural extension 
	of the map  which is a symbolic space over uncountably many symbols \cite{BJ}, \cite{P}, \cite{QXZ},
	no attempt has been done to develop a two-sided symbolic dynamics on finite alphabets for non-invertible dynamics. 
	
	In this work, we aim to introduce the zip shift maps over  zip shift spaces with  finite alphabets, which are extensions of two-sided shift maps over shift spaces. 
	The zip shift map is defined as a local homeomorphism on the zip shift space which is the set of bi-infinite sequences 
	over two sets of alphabets. 
	This is because for non-invertible dynamics, although the forward dynamics is induced by the backward one, yet it can be a completely different dynamics, including important data. 
	For instance, in Section \ref{ho}
	we compare the 2-to-1 horseshoe map
	and 
	the map $g(x)=4x$ modulo 1.  As it is known, both of  the maps  are semi-conjugate to the full one-sided shift over the full-shift in 4 symbols and the shift 
	map can not distinguish these two dynamics. 
	We show that these systems can be identified, by conjugation with different zip shift maps.
	From a measure theoretical point of view, in \cite{LM1},\cite{MM}, we introduce the notion of $(m,l)$-Bernoulli property as 
	an extension to the previously known two-sided Bernoulli property.  It illustrates the fundamental role of investigating two-sided ergodic properties in 
	identifying one-sided Bernoulli maps. 
	From a dynamical system point of view we wish that, the zip shift map provides a new approach to
	the classification of local homeomorphisms and to a better comprehension of the statistical properties of local diffeomorphisms.
	
	In addition, in this work, we present a concise construction of an $N$-to-1 horseshoe set, which  is an example of admitting a good Markov partition for endomorphisms. 
	In such cases 
	the zip shift map acts like a zipper by taking special bi-infinite sequences and merge or 
	collapse the entries of negative indices and create entries with new alphabet  to describe the forward dynamics (Definition \ref{ZS}).
	When a system admits a `good' Markov partition or even a `good' labeling for its different states, 
	(the labeled states in forward and backward should map together by dynamics) it is enough to represent the map by a conjugate zip shift map.
	
	In what follows,  Section \ref{sec:2} of this paper contains the  introduction to the zip shift maps  over zip shift spaces and also some basic definitions, properties and
	generalizations.

	Section \ref{sec3pre} proceeds with the study of pre-images of points under zip shift maps. In Section \ref{sec:3.2}, 
	for the case when the zip shift space is sofic, we defined a labeled graph where the bi-infinite walks on the graph represent the elements of the zip shift space. 
	Example \ref{eg p } shows that in general case,  even for a simple zip shift space, it is not easy to find the exact pre-images of the points. 
	In Theorem \ref{main2},  we provide a way 
	to find all the pre-images of a given point, up to sofic zip shift spaces.

	A generalization of the Smale horseshoe to an $N$-to-1 case, is given in Section \ref{ho}. Two natural Markov partitions, 
	one for forward iterates and another for the backward iterates are stated. These partitions are mapped together by horseshoe 
	map and give rise to a zip shift space. In Theorem \ref{4.2} the corresponding conjugacy between a zip shift map and the horseshoe map is stated. 
	In Example  \ref{ex:4.3} the 2-to-1 horseshoe map and the map $g(x)=4x$ modulo 1 are compared.
	
	In Section \ref{long},
	the stable-unstable sets; periodic, pre-periodic and  homoclinic  points and their orbits for the zip shift map, are defined.
	Furthermore, as an application in Subsection \ref{PPH-ho}
	the stable-unstable sets for the  $N$-to-1 horseshoe, also, their periodic and  homoclinic points as well as their orbits, are investigated. These studies asserts many new data on the 
	behavior and the importance of the role of the pre-periodic points (see Definition \ref{pre-p}) homoclinic points and their orbits. 
	
	\section{Zip shift spaces}\label{sec:2}
	Consider two finite sets of alphabets $\mathcal{A}=\{a,\ldots,\,b\}$ and $\mathcal{A}'=\{a',\ldots,\,b'\}$. 
	Let  $\Sigma_{\mathcal{A}'}\subseteq \{(x_i)_{i\in \mathbb{N}\cup\{0\}},\, x_i\in \mathcal{A}'\}$ be a one-sided  shift space.
	A finite sequence $w=w_{[k,\ell]}=x_k\, x_{k+1}\, \ldots\, x_{\ell}$  which occurs in some point $(x_i)_{i\in \mathbb{N}\cup\{0\}}\in \Sigma_{\mathcal{A}'}$
	is called a \textit{word} or \textit{block} of length $|w|=\ell-k+1$. 
	Let $\beta_n^{\mathcal{A}'}=\beta_n^{\mathcal{A}'}(\Sigma_{\mathcal{A}'})$ be 
	the set of all admissible words of length $n$ and $\beta^{\mathcal{A}'}=\beta^{\mathcal{A}'}(\Sigma_{\mathcal{A}'})=\bigcup_{n\geq 1}\beta_n^{\mathcal{A}'}$. Obviously, $\beta_1^{\mathcal{A}'}=\mathcal{A}'$.
		\subsection{Definition of zip shift space and zip shift map}
	Let  $\varphi_n: \beta^{\mathcal{A}'}_n\longrightarrow \mathcal{A}$ be a transition factor map which is onto
	(not necessarily invertible)  with domain $\beta^{\mathcal{A}'}_n$. 
	Associate to $\varphi_n$ a 0-1 transition matrix $T$ such that for the word $a_1'\ldots a_n'\in \beta^{\mathcal{A}'}_n$ and $a\in \mathcal{A}$, $T(a_1'\ldots a_n',\,a)=1$ if and only if $\varphi_n(a_1'\ldots a_n')=a$.

	Let $Y=\{y=(y_i)_{i\in \mathbb{Z}},\, y_i\in \mathcal{A}'\}$ be a two-sided shift space with its shift map $\sigma_Y$. To any point $y\in Y$,  correspond a 
	point $x=(x_i)_{i\in \mathbb{Z}}=(\ldots,x_{-2},\,x_{-1};\,x_0,\,x_1,\,x_2,\ldots)$ such that
	\begin{equation}\label{ZS}
		x_i=\left\{\begin{tabular}{ll}
			$y_i\in \mathcal{A}' \hspace{7mm} $& $\forall i\geq 0$\\
			$\varphi_n (y_{i}\ldots y_{i+n-1})\in \mathcal{A} \hspace{7mm} $& $\forall i<0$. 
		\end{tabular}\right.
	\end{equation}
	According to set $Y$, define the \textit{zip shift space} as the set
	$$\Sigma_{\mathcal{A},\mathcal{A}'}:= \{x=(x_i)_{i\in \mathbb{Z}}\ : x_i \textrm{\ satisfies} \,(\ref{ZS})\}.$$
	If $Y$ is a full shift space, then $\Sigma_{\mathcal{A},\mathcal{A}'}$  will be called a \textit{full zip shift space}.
	Let $\Sigma_{\mathcal{A}'}=\{(x_0,\, x_1,\ldots)\ :  (x_i)_{i\in \mathbb{Z}}\in \Sigma_{\mathcal{A},\mathcal{A}'}\}$ and  
	$\Sigma_{\mathcal{A}}^-=\{( \ldots, x_{-2},\, x_{-1})\ :  (x_i)_{i\in \mathbb{Z}}\in \Sigma_{\mathcal{A},\mathcal{A}'}\}$. 
	From here on, we denote $\Sigma_{\mathcal{A}}^-$ by $\Sigma_{\mathcal{A}}$.
	The set $\Sigma_{\mathcal{A}}$ is a factor  of $\Sigma_{\mathcal{A}'}$ due to the map $\varphi_n$.
	Define the \textit{zip shift map} on $\Sigma_{\mathcal{A},\mathcal{A}'}$ with same notation for shift maps as follows
	$$(\sigma(x))_i=\left\{\begin{tabular}{ll}
		$\varphi_n(x_{0}\ldots x_{n-1})\hspace{7mm} $& if $i= -1$\\
		$x_{i+1}\hspace{7mm} $& otherwise. 
	\end{tabular}\right.$$ 
	The zip shift map is a local homeomorphism  generalization of a two-sided shift homeomorphism.  
	If one let $\mathcal{A}=\mathcal{A}'$ and $\varphi_1=Id$ then the zip shift map is  a  two-sided shift map.
	
	Any subspace $\Sigma$ of $\Sigma_{\mathcal{A},\mathcal{A}'}$ which is closed under $\sigma$ is
	called a \textit{sub zip shift space} of $\Sigma_{\mathcal{A},\mathcal{A}'}$.
Now $\beta_n^{\mathcal{A},\mathcal{A}'}=\beta^{\mathcal{A},\mathcal{A}'}_n(\Sigma)$ is the set of all words of length $n$ in $\Sigma$. 
	The set $\beta^{\mathcal{A},\mathcal{A}'}=\beta^{\mathcal{A},\mathcal{A}'}(\Sigma)=\bigcup_{n\geq 1}\beta^{\mathcal{A},\mathcal{A}'}_n(\Sigma)$
	is  the set of all \textit{admissible} words or the \textit{language} of $\Sigma$.
	Let $\beta_n^{\mathcal{A}}$ be the subset of $\beta^{\mathcal{A},\mathcal{A}'}(\Sigma)$ with alphabets in  $\mathcal{A}$  and 
	$\beta^{\mathcal{A}}=\bigcup_{n\geq 1}\beta_n^{\mathcal{A}}$. For $w'=a_i' a_{i+1}'\ldots a_{i+m}'\in \beta^{\mathcal{A}'}$ and  $m>n$, 
	we simply extend the definition of $\varphi_n$ as
	\begin{equation}\label{extend}
		\varphi_n(w')=\varphi_n(a_i'\ldots a_{i+n-1}')\varphi_n(a_{i+1}'\ldots a_{i+n}')\ldots \varphi_n(a_{i+m-n+1}'\ldots a_{i+m}')\in \beta^{\mathcal{A}}.
	\end{equation}
	
	Let  $\Sigma_{\mathcal{A},\mathcal{A}'}$ be a full zip shift space. For $\Sigma\subset \Sigma_{\mathcal{A},\mathcal{A}'}$  with admissible words $\beta^{\mathcal{A},\mathcal{A}'}$, 
	the words in $\beta^{\mathcal{A},\mathcal{A}'}(\Sigma_{\mathcal{A},\mathcal{A}'})\setminus \beta^{\mathcal{A},\mathcal{A}'}(\Sigma)$ will not appear in any sequences in $\Sigma$.
	Let $\mathcal{F}$ be a minimal set of words (a set with fewest and shortest words)  such that no word in $\beta^{\mathcal{A},\mathcal{A}'}(\Sigma)$ contains a word of $\mathcal{F}$. This set is called  
	the set of \textit{forbidden blocks} of  $\Sigma$. Let $\mathcal{F}^{\mathcal{A}}$ and $\mathcal{F}^{\mathcal{A}'}$ 
	be the subsets of $\mathcal{F}$ 
	determining the forbidden blocks of the
	sets $\Sigma_{\mathcal{A}}$ and $\Sigma_{\mathcal{A}'}$ respectively.  
	Note that if  $w'=a_1'a_2'\ldots a_m'\in \mathcal{F}$, 
	then $\varphi_n$ will not be defined for any word containing $w'$.
	
	Recall that a shift space is a subshift of finite type (SFT), if   there are only finite forbidden blocks with finite length.  For any SFT, there exists a $K$ such that the SFT is
	a $K$-step Markov space and the length of forbidden blocks is less than $K+1$. The image of an SFT under a factor map is called a sofic space \cite{LM}. 
	For a  zip shift space $\Sigma_{\mathcal{A},\mathcal{A}'}$, if $|\mathcal{F}|<\infty$ and the length of longest word in $\mathcal{F}$ is $M+1$, 
	then call $\Sigma_{\mathcal{A},\mathcal{A}'}$  an \textit{$M$-step Markov space}. 
	More precisely,  suppose that $\Sigma_{\mathcal{A}'}$ is  $K$-step  and $\Sigma_{\mathcal{A}}$ is   $N$-step.
	Then for $M=\max \{N,\,K\}$, $\Sigma_{\mathcal{A},\mathcal{A}'}$ is an  $M$-step Markov space. 
	In this case one can associate 3 matrices such that they all together represent the $M$-step space $\Sigma_{\mathcal{A},\mathcal{A}'}$.
	As for shift  spaces, $\Sigma_{\mathcal{A}'}$ can be represented by an adjacency matrix $A'$ such that for 
	words $v'=a_1'\ldots a_{K}',\,w'=b_1'\ldots b_{K}'\in  \beta^{\mathcal{A}'}_{K}$,  $A'(v',\ w')=1$  if and only if
	$a_1'\ldots a_{K-1}'=b_2'\ldots b_{K}'$ and $a_1'\ldots a_{K}'b_{K}'\in \beta^{\mathcal{A}'}_{K+1}$. 
	The matrices $A$ and $T$ are defined as follows. For words $v=a_1\ldots a_{N},\,w=b_1\ldots b_{N}\in  \beta^{\mathcal{A}}_{N}$,  $A(v,\ w)=1$  if and only if
	$a_1\ldots a_{N-1}=b_2\ldots b_{N}$ and $a_1\ldots a_{N}b_{N}\in \beta^{\mathcal{A}}_{N+1}$. 
	Also, $T(v,\, v')=1$ if and only if $vv'\in\beta^{{\mathcal{A},\mathcal{A}'}}_{N+K} $. We call $A$ and $A'$ the {\it adjacency} matrices due to alphabets $\mathcal{A}$ and $\mathcal{A}'$. The matrix $T$ can be defined by using the words of maximum length $M+1$ but it would be a larger matrix.

	The space  $\Sigma_{\mathcal{A},\mathcal{A}'}$ is an SFT only if both $\Sigma_{\mathcal{A}}$ and $\Sigma_{\mathcal{A}'}$ are SFTs.
	For an SFT or sofic space $\Sigma_{\mathcal{A}'}$, the factor space $\Sigma_{\mathcal{A}}$ can be either SFT  or sofic.
	If both $\Sigma_{\mathcal{A}}$ and $\Sigma_{\mathcal{A}'}$ are sofic spaces (any SFT is sofic), then call $\Sigma_{\mathcal{A},\mathcal{A}'}$ a \textit{sofic space}. Such   space 
	is called \textit{strictly sofic} if it is not an SFT.

	If $\Sigma_{\mathcal{A},\mathcal{A}'}$ does not depend on time, then in some cases it can be represented by graphs. 
	Let $\Sigma_{\mathcal{A}'}$ be a sofic (not necessarily strictly sofic) space  with labeled graph  $\mathcal{G}=(G,\, \mathcal{L})$
	(see \cite{LM} for definitions). Let $y=(y_i)_{i\in \mathbb{Z}}$ be a bi-infinite walk on $\mathcal{G}$.
	Then $x=(x_i)_{i\in \mathbb{Z}}$ is a point in $\Sigma_{\mathcal{A},\mathcal{A}'}$ if $x_i=y_i$ for $i\geq 0$ and $x_i=\varphi_n(y_i,\ldots y_{i+n})$ for $i\leq -1$.
	\begin{example}\label{eg 1}
		Let $G$ be the left graph in Figure \ref{F1}.
		Let    $\mathcal{A}'=\{1,2,3\}$ be the alphabet set   and  $\mathcal{F}=\{11,\, 13,\, 21,\, 33\}$ be   the forbidden set. 
		The corresponding labeled graph is the right graph in Figure \ref{F1}. Suppose $\mathcal{A}=\{a,b\}$ with $\varphi_1(1)=a$ and $\varphi_1(2)=\varphi_1(3)=b$.
			A typical sequence in $\Sigma_{\mathcal{A},\mathcal{A}'}$ will be 
		$$
		\begin{tabular}{rcl}
			$x$&=&$(\ldots,x_{-3},\, x_{-2},\, x_{-1};\, x_0,\,x_1,\,x_2,\, x_3,\,x_4,\ldots )$\\
			&=& $(\ldots,\ \ b,\ \ \ a,\ \ \ b;\ \ \ 3,\ \ 1,\ \ 2,\ \ 2,\ \ 3,\ \ \ldots ),$
		\end{tabular} 
		$$
		which is obtained by a walk on labeled graph $\mathcal{G}$.
	\end{example} 
	
%
%
%
%

\begin{figure}[t]
\includegraphics[width=0.75\textwidth]{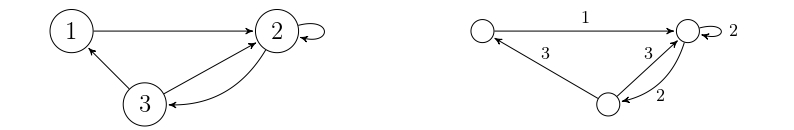}
\caption{The right graph is the labeled graph $\mathcal{G}$ of the left graph.}
\label{F1}
\end{figure}
	\begin{definition}\label{defs}  Let $\Sigma$ be a sub zip shift space  with alphabets $\mathcal{A}$ and $\mathcal{A}'$.
		\begin{itemize}
			\item The  space $\Sigma$  is called \textit{irreducible} if for any $a', b'\in 
			\mathcal{A}'$, there exists a word $w'\in \beta^{\mathcal{A}'}$ such that $a'w'b'\in \beta^{\mathcal{A}'}$. 
			\item A point $(x_i)_{i\in \mathbb{Z}}\in \Sigma$ is  \textit{periodic}  with period $m$ if there exist $p_{1}',\cdots,p_{m-1}'\in \mathcal{A}'$ 
			such that for $i\geq 0$, $x_{mi}=p_0'$, $x_{mi+1}=p_1'$, 
			$\cdots$, $x_{mi+m-1}=p_{m-1}'$  and also, 
			$x_{mi}=\varphi_n(x_0,\cdots, x_{n-1})$, $x_{mi+1}=\varphi_n(x_1,\cdots, x_{n})$, $\cdots$ and 
			$x_{mi+m-1}=\varphi_n(x_{m-1},\cdots, x_{m+n-2})$   for $i\leq -1$. 
			Denote such a periodic point by $p=(\overline{p_0',\,p_1',\,\cdots, p_{m-1}'})$. Let $\mathrm{Per}_m(\Sigma)$ be the
			set of all periodic points with period $m$ and $\mathrm{Per}(\Sigma)=\cup_{m\geq 0}\mathrm{Per}_m(\Sigma)$.
		\end{itemize}
	\end{definition}
	\begin{remark}
		\begin{itemize}
			\item Since $\Sigma$ is closed under $\sigma$, there is no need to verify the irreducibility for the words in $\beta^{\mathcal{A}}$. 
			\item If $\Sigma$ is representable by  graph $G$ or $\mathcal{G}=(G,\mathcal{L})$, then $\Sigma$ is irreducible if and only if $G$ is a strongly connected graph. 
			\item If  matrices $A$, $A'$ and $T$ associate to an SFT zip shift space $\Sigma$, then $\Sigma$ is irreducible if and only if  for any $a',\,b'\in \mathcal{A}'$, there exists $n\geq 0$ with ${A'}^n(a',\,b')\geq 1$.
			\item Suppose the factor map $\varphi_n$ is not one-to-one. Then for some distinct words $w'=a_1'\ldots\,a_{n}'$ and $w''=a_1''\ldots\,a_{n}''$ both in 
			$\beta_{\mathcal{A}'}(\Sigma)$, we may have $\varphi_n(w')=\varphi_n(w'')$.
			For this reason, the repeating part in negative indices of the periodic point may have different length  from the repeating part in non-negative indices.
			For example the point $y=(\ldots, b,b,b;2,3,2,3,2,3,\ldots)=(\overline{2,3})$  is a periodic point in Example \ref{eg 1}. 
			The repeating part in negative and non-negative indices are different. Also, the distinct points $y$ and $z=(\ldots, b,b,b;3,2,2,3,2,2, \ldots)=(\overline{3,2,2})$ have the same negative part.
			\item If $A'$ is the adjacency matrix for an SFT space $\Sigma$, then $\mathrm{trace} (A'^n)$ determines the number of points of period $n$.
		\end{itemize}
	\end{remark}
	
	\subsection{Higher block zip shifts, higher power zip shifts and zip shift sliding block codes}
	The definitions and theorems in this subsection are generalizations of the similar ones for shift spaces defined in \cite{LM}.
	\begin{definition} 
		Let $ \Sigma_{\mathcal{A},\mathcal{A}'}$ be a zip shift space. For some natural number $N$,
		\begin{itemize}
			\item[i)] define the \textit{$N$th higher block code} $h_{N}:\Sigma_{\mathcal{A},\mathcal{A}'}\rightarrow (\beta^{\mathcal{A},\mathcal{A}'}_{N}(\Sigma_{\mathcal{A},\mathcal{A}'}))^{\mathbb{Z}}$ as 
			$$(h_N(x))_{[i]}:=x_{[i,\, i+N-1]}$$
			and the \textit{$N$th higher block zip shift} $\Sigma_{\mathcal{A},\mathcal{A}'}^{[N]}$ as $\Sigma_{\mathcal{A},\mathcal{A}'}^{[N]}=h_{N}(\Sigma_{\mathcal{A},\mathcal{A}'})$.
			\item[ii)] let $\gamma_{N}:\Sigma_{\mathcal{A},\mathcal{A}'}\rightarrow (\beta^{\mathcal{A},\mathcal{A}'}_{N}(\Sigma_{\mathcal{A},\mathcal{A}'}))^{\mathbb{Z}}$ be the \textit{$N$th higher power code}  
			$$(\gamma_N(x))_{[i]}:=x_{[iN,\, iN+N-1]}$$
			and define  the \textit{$N$th higher power zip shift} $\Sigma_{\mathcal{A},\mathcal{A}'}^{N}$  as $\Sigma_{\mathcal{A},\mathcal{A}'}^{N}=\gamma_{N}(\Sigma_{\mathcal{A},\mathcal{A}'})$.
		\end{itemize} 
	\end{definition}
	Note that $h_N(x)$ and $\gamma_{N}(x)$ are defined with respect to the order of successive blocks of length $N$ occurring in $x$.
	The next theorem follows from the definition of zip shift space and Theorems 1.4.3 and 1.4.6 in \cite{LM}. 
	\begin{theorem}\label{h}
		The higher block zip shift code and higher power zip shift code of a zip shift space are  zip shift spaces.
	\end{theorem}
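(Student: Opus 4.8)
The plan is to treat the two assertions in parallel, reducing each to the corresponding fact for ordinary shift spaces (Theorems 1.4.3 and 1.4.6 of \cite{LM}) together with the defining rule (\ref{ZS}) of a zip shift space. To say that $\Sigma_{\mathcal{A},\mathcal{A}'}^{[N]}=h_N(\Sigma_{\mathcal{A},\mathcal{A}'})$ (resp. $\Sigma_{\mathcal{A},\mathcal{A}'}^{N}=\gamma_N(\Sigma_{\mathcal{A},\mathcal{A}'})$) is a zip shift space means exhibiting two new finite alphabets $\widetilde{\mathcal{A}},\widetilde{\mathcal{A}}'$, a new two-sided shift space $\widetilde{Y}$ over $\widetilde{\mathcal{A}}'$, and an onto transition factor map $\widetilde{\varphi}_{\widetilde{n}}$, such that the image is exactly the set of bi-infinite sequences built from $(\widetilde{Y},\widetilde{\varphi}_{\widetilde{n}})$ by (\ref{ZS}). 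In both cases I would take $\widetilde{\mathcal{A}}'=\beta_N^{\mathcal{A}'}(\Sigma_{\mathcal{A}'})$, the admissible $\mathcal{A}'$-blocks of length $N$, since for indices $i\geq 0$ the block $x_{[i,i+N-1]}$ lies entirely in the $\mathcal{A}'$-region; the nonnegative parts of $h_N(x)$ and $\gamma_N(x)$ are then literally the higher block, resp. higher power, recodings of the $\mathcal{A}'$-sequence, and I let $\widetilde{Y}=Y^{[N]}$ (resp. $Y^{N}$), which is a two-sided shift space by the cited results of \cite{LM}. It then remains only to encode the negative tail by a transition map.

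For the higher power zip shift the construction is clean, and I would carry it out first. For $i\leq -1$ the block $x_{[iN,\,iN+N-1]}$ has largest index $(i+1)N-1\leq -1$, hence lies entirely in the $\mathcal{A}$-region and equals $\varphi_n\!\left(y_{[iN,\,iN+N+n-2]}\right)$ by the extension (\ref{extend}). This length-$(N+n-1)$ window of $y$ is contained in the content of the new symbols $\widetilde{y}_i,\ldots,\widetilde{y}_{i+\widetilde{n}-1}$ as soon as $\widetilde{n}=1+\lceil (n-1)/N\rceil$, so I may define $\widetilde{\varphi}_{\widetilde{n}}$ on $\beta_{\widetilde{n}}^{\widetilde{\mathcal{A}}'}(Y^{N})$ by this very formula, with $\widetilde{\mathcal{A}}=\beta_N^{\mathcal{A}}$ taken to be the image so that the map is onto. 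Because the output depends only on the window content and not on the absolute position, $\widetilde{\varphi}_{\widetilde{n}}$ is a genuine factor map; one then checks that the zip shift space determined by $(\widetilde{Y},\widetilde{\varphi}_{\widetilde{n}})$ coincides with $\gamma_N(\Sigma_{\mathcal{A},\mathcal{A}'})$ and that $\gamma_N$ intertwines $\sigma^{N}$ with the new zip shift map.

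For the higher block zip shift I would proceed identically with $\widetilde{Y}=Y^{[N]}$ and window length $\widetilde{n}=n$: for $i\leq -N$ the block $x_{[i,i+N-1]}$ is again pure and equals $\varphi_n\!\left(y_{[i,i+N+n-2]}\right)$, read off the window $\widetilde{y}_i,\ldots,\widetilde{y}_{i+n-1}$ exactly as above. The hard part will be the boundary. For $-(N-1)\leq i\leq -1$ the block $x_{[i,i+N-1]}$ straddles index $0$, so it is a mixed word whose first $|i|$ entries are $\mathcal{A}$-symbols $\varphi_n(\cdots)$ while the remaining entries are the raw $\mathcal{A}'$-symbols $y_0,\ldots,y_{i+N-1}$. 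Such mixed blocks are not produced by the sliding-window machinery of (\ref{extend}), and, crucially, whether a given window content yields a mixed or a pure block depends on the absolute position $i$ rather than on the content alone, which a forward window of $\widetilde{y}$ cannot detect. The plan is therefore to enlarge $\widetilde{\mathcal{A}}$ to $\beta_N^{\mathcal{A}}$ together with the finitely many ($N-1$) mixed boundary blocks and to fix their images by explicit bookkeeping at these $N-1$ exceptional positions, using that $h_N$ is an injective $\sigma$-equivariant coding onto its closed, invariant image; verifying that this boundary prescription is consistent and reproduces $h_N(\Sigma_{\mathcal{A},\mathcal{A}'})$ exactly is the delicate point I expect to absorb most of the work. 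Once the transition data are in place in both cases, closedness and invariance of the image under the respective zip shift map, together with continuity and injectivity of $h_N$ and $\gamma_N$, complete the argument.
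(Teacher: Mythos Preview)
Your treatment of the higher power code $\gamma_N$ is fine and considerably more explicit than the paper's own argument, which is the single sentence that the result ``follows from the definition of zip shift space and Theorems~1.4.3 and~1.4.6 in \cite{LM}.'' For $\gamma_N$ every $N$-block at a negative index lies entirely in the $\mathcal{A}$-region, so the transition map $\widetilde{\varphi}_{\widetilde{n}}$ you write down is well defined and position-independent, and the reduction to \cite{LM} for $\widetilde{Y}=Y^{N}$ goes through.

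For $h_N$, however, the difficulty you flag is not merely ``delicate'': it blocks the plan as written. Rule (\ref{ZS}) applies a \emph{single}, position-independent transition map $\widetilde{\varphi}_{\widetilde{n}}:\beta_{\widetilde{n}}^{\widetilde{\mathcal{A}}'}\to\widetilde{\mathcal{A}}$ uniformly at every index $i<0$; there is no mechanism in the definition for ``explicit bookkeeping at these $N-1$ exceptional positions.'' Since $\widetilde{Y}=Y^{[N]}$ is shift-invariant, the same length-$\widetilde{n}$ window of $\widetilde{Y}$-symbols can occur both at $i=-1$ and at some $i\le -N$, and at those two positions $(h_N(x))_i$ must be, respectively, a mixed block in $\beta_N^{\mathcal{A},\mathcal{A}'}$ and a pure block in $\beta_N^{\mathcal{A}}$. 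No enlargement of $\widetilde{\mathcal{A}}$ repairs this, because the required $\widetilde{\varphi}_{\widetilde{n}}$ would have to take two different values on the same input. Thus the step ``fix their images by explicit bookkeeping'' cannot be carried out within (\ref{ZS}); you have correctly located the obstruction but not removed it. The paper's one-line proof does not confront this boundary issue at all, so what you have uncovered is a genuine gap in the argument rather than a routine detail.
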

	To define the sliding block code, suppose $X\subset \mathcal{C}^{\mathbb{Z} }$ and $Y\subset \mathcal{C}'^{\mathbb{Z} }$ are two shift spaces with alphabets in
	$\mathcal{C}$ and $\mathcal{C}'$. For these spaces, the block map $\Phi: \beta^{C}_{M+N+1}(X)\rightarrow \mathcal{C}'$ 
	maps the admissible blocks of $X$ with length $M+N+1$ to letters in $\mathcal{C}'$ \cite{LM}. This block map factors the points 
	$x=(\ldots x_{-1},\, x_0,\, x_1,\, \ldots)\in X$ to a point $y=(\ldots y_{-1},\, y_0,\, y_1,\, \ldots)\in Y$ and naturally implies a map 
	$\phi: X\rightarrow Y$ such that 
	$\phi(x)=y$.
		This definition extends to sliding block codes for zip shift spaces. In new definition, we just concider the anticipation $N$.
				\begin{definition}\label{def slide}
			Let $\Sigma_{\mathcal{A},\mathcal{A}'}$ 
			be a 
			zip shift space with transition maps $\varphi_n^{\mathcal{A},\mathcal{A}'}:\beta^{\mathcal{A}'}_n\longrightarrow \mathcal{A}$.
			For two alphabet sets $\mathcal{C}$ and $\mathcal{C}'$, 
			define a  map $$\Psi: \beta^{\mathcal{A},\mathcal{A}'}_{n}\cup \beta^{\mathcal{A},\mathcal{A}'}_{n+1}\rightarrow
			\mathcal{C}\cup \mathcal{C}'$$
			such that 
			\begin{equation}\label{star}
				\Psi(x_{[-1,\, n-1]})=y_{-1}\in \mathcal{C},\ \textrm{and}\  
				\Psi(x_{[0,\, n-1]})=y_{0}\in \mathcal{C}'.
			\end{equation}
			For $m>0$, define $\beta^{\mathcal{C}'}_{m}:= [y_0,\cdots, y_{m-1}]$ induced by iterations of $\Psi$ over subblocks of $x_{[0,m+n-2]}$.
			If there exists a transition map $\varphi_m^{\mathcal{C},\mathcal{C}'}:\beta^{\mathcal{C}'}_m\longrightarrow \mathcal{C}$
			such that 
			\begin{equation}\label{starstar}
				y'_{-1}:=\varphi_m^{\mathcal{C},\mathcal{C}'}([y_0, \cdots , y_{m-1}])=\Psi ([\varphi_n^{\mathcal{A},\mathcal{A}'}[x_0,\cdots , x_{n-1}], x_1, \cdots , x_{n-1}]),
			\end{equation}
			then call $\Psi$ a \textit{block map}. 
			If there exists a map  $\psi:\Sigma_{\mathcal{A},\mathcal{A}'}\rightarrow \Sigma_{\mathcal{C},\mathcal{C}'}$ such that for any $x\in \Sigma_{\mathcal{A},\mathcal{A}'}$, there exists $y\in \Sigma_{\mathcal{C},\mathcal{C}'}$ such that $\psi(x)=y$  and $y_i$ is given by $\Psi$ satisfing in
			\eqref{star} and \eqref{starstar}, then call $\psi$  the \textit{zip shift sliding block code} induced by block map $\Psi$.
		\end{definition}
		\begin{remark}\label{rem1}
			\begin{itemize}
				\item In Theorem \ref{th 2.12}, we prove that the image of a zip shift sliding block code is in fact a zip shift space. Therefore, we used the notation of the zip shift space $\Sigma_{\mathcal{C},\mathcal{C}'}$ for $\psi(\Sigma_{\mathcal{A},\mathcal{A}'})$.
				\item The equations \eqref{star} and \eqref{starstar} are the result of the following diagram, where $\Psi$ should induce a ``local commutation" rule between the maps $\varphi_n^{\mathcal{A},\mathcal{A}'}$, $\varphi_m^{\mathcal{C},\mathcal{C}'}$, $\sigma_{\Sigma_{\mathcal{A},\mathcal{A}'}}$ and $\sigma_{\Sigma_{\mathcal{C},\mathcal{C}'}}$ as 
				\begin{equation}
					\begin{CD}\label{diagram1}
						[x_{-1}; x_0, x_1 , \cdots , x_{k}]     @> >>  [\varphi_n^{\mathcal{A},\mathcal{A}'}{[x_0, \cdots , x_{n-1}]}; x_1, \cdots , x_{k +1}]\\
						@VV V        @VV V\\
						[y_{-1}; y_0, \cdots , y_{k-n}]     @> >>  [y'_{-1}; y_1, \cdots , y_{k-n+1}].
					\end{CD}
				\end{equation}
				Here, $k=n+m-2$.
				\item The zip shift sliding block code can be defined through a block map $\Theta: \beta^{\mathcal{A},\mathcal{A}'}_{k+2}\rightarrow \beta^{\mathcal{C},\mathcal{C}'}_{k-n+2}$ over blocks of length $k\geq n+m-2$, but still any entry of $\theta (x_{[-1,\, k]})$ is determined by $\Psi$ defined in equations \eqref{star} and \eqref{starstar}. 
				\item By equation \eqref{star}, the definition of  $\Psi$ naturally extends  to blockes with all positive indeces $i\geq 0$. For negative indeces, let  $\ell=\max\{n,m\}$. Then $\ell$ iteration of diagram \eqref{diagram1} defines a rule to convert a block in $\beta^{\mathcal{A}}_{\ell}$ to a block in $\beta^{\mathcal{C}}_{\ell}$.
			\end{itemize}
		\end{remark}
	\begin{theorem}\label{comu}
		Let $\Sigma_{\mathcal{A},\mathcal{A}'}$ and $\Sigma_{\mathcal{C},\mathcal{C}'}$ be two 
		zip shift spaces with transition maps $\varphi_n^{\mathcal{A},\mathcal{A}'}$ and $\varphi_m^{\mathcal{C},\mathcal{C}'}$ as in Definition \ref{def slide}
		and  $\psi:\Sigma_{\mathcal{A},\mathcal{A}'}\rightarrow \Sigma_{\mathcal{C},\mathcal{C}'}$ be a zip shift sliding block code. 
		Let $\sigma_{\Sigma_{\mathcal{A},\mathcal{A}'}}$ and $\sigma_{\Sigma_{\mathcal{C},\mathcal{C}'}}$ be the zip shift maps on spaces 
		$\Sigma_{\mathcal{A},\mathcal{A}'}$ and  $\Sigma_{\mathcal{C},\mathcal{C}'}$  respectively.
		Then $\psi \circ \sigma_{\Sigma_{\mathcal{A},\mathcal{A}'}}= \sigma_{\Sigma_{\mathcal{C},\mathcal{C}'}} \circ \psi$.
	\end{theorem}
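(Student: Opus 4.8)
The plan is to establish the identity coordinatewise. Fix $x=(x_i)_{i\in\mathbb Z}\in\Sigma_{\mathcal A,\mathcal A'}$ and abbreviate $\sigma_{\mathcal A}:=\sigma_{\Sigma_{\mathcal A,\mathcal A'}}$, $\sigma_{\mathcal C}:=\sigma_{\Sigma_{\mathcal C,\mathcal C'}}$, and $y:=\psi(x)$. It suffices to show that $\bigl(\psi(\sigma_{\mathcal A}(x))\bigr)_i=\bigl(\sigma_{\mathcal C}(y)\bigr)_i$ for every $i\in\mathbb Z$. Recall that each zip shift map is the ordinary left shift on every coordinate except $-1$, where it inserts the transition value $\varphi$. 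Likewise, by \eqref{star} and the last item of Remark \ref{rem1}, $\psi$ reads off $y_i=\Psi(x_{[i,\,i+n-1]})$ for $i\ge 0$, $y_{-1}=\Psi(x_{[-1,\,n-1]})$, and the remaining negative entries through the iterated rule of Remark \ref{rem1}. I would therefore treat the three ranges $i\ge 0$, $i=-1$, and $i\le -2$ separately, the whole statement being the zip shift analogue of the classical fact that a sliding block code commutes with the shift.

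For $i\ge 0$ the argument is direct. Since $\sigma_{\mathcal A}$ acts as a plain shift on all non-negative coordinates, $(\sigma_{\mathcal A}(x))_{[i,\,i+n-1]}=x_{[i+1,\,i+n]}$, whence $\bigl(\psi(\sigma_{\mathcal A}(x))\bigr)_i=\Psi(x_{[i+1,\,i+n]})=y_{i+1}=\bigl(\sigma_{\mathcal C}(y)\bigr)_i$. The crux is the coordinate $i=-1$. Here $(\sigma_{\mathcal A}(x))_{-1}=\varphi_n^{\mathcal A,\mathcal A'}(x_0\ldots x_{n-1})$ and $(\sigma_{\mathcal A}(x))_{[0,\,n-1]}=x_{[1,\,n]}$, so the defining block of $\bigl(\psi(\sigma_{\mathcal A}(x))\bigr)_{-1}$ is obtained by prepending the zipped symbol $\varphi_n^{\mathcal A,\mathcal A'}(x_0\ldots x_{n-1})$ to the window $x_{[1,\,n]}$; applying $\Psi$ gives exactly the right-hand side of \eqref{starstar}. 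On the other side, the definition of the zip shift map yields $\bigl(\sigma_{\mathcal C}(y)\bigr)_{-1}=\varphi_m^{\mathcal C,\mathcal C'}(y_0\ldots y_{m-1})$, which is the left-hand side of \eqref{starstar}. Thus the two agree precisely because $\Psi$ is a block map, i.e. because the square \eqref{diagram1} commutes; this is the single place where the compatibility hypothesis \eqref{starstar} is genuinely invoked.

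For $i\le -2$ both zip shift maps act as the ordinary shift, so $\bigl(\sigma_{\mathcal C}(y)\bigr)_i=y_{i+1}$ with $i+1\le -1$, and I would identify this with $\bigl(\psi(\sigma_{\mathcal A}(x))\bigr)_i$ by invoking the negative-index rule of Remark \ref{rem1}. That rule converts a block in $\beta^{\mathcal A}_{\ell}$ to a block in $\beta^{\mathcal C}_{\ell}$ (with $\ell=\max\{n,m\}$) by $\ell$-fold iteration of the commuting square \eqref{diagram1}; being a fixed sliding-window rule, it is automatically shift-equivariant, so the window of $\sigma_{\mathcal A}(x)$ centred at $i$ is the window of $x$ centred at $i+1$ and the two outputs coincide. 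The main obstacle is the bookkeeping at the seam between the zipped and unzipped halves: for the indices $i$ whose defining window straddles $-1$, one must check that the symbol $(\sigma_{\mathcal A}(x))_{-1}=\varphi_n^{\mathcal A,\mathcal A'}(x_0\ldots x_{n-1})$ is consumed by the iterated rule exactly as predicted by \eqref{starstar}, so that the alignment already established at $i=-1$ propagates downward. Once this seam compatibility is verified, an induction on the number of iterations of \eqref{diagram1} closes the range $i\le -2$, and combining the three ranges gives $\psi\circ\sigma_{\mathcal A}=\sigma_{\mathcal C}\circ\psi$.
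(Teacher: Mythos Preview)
Your proposal is correct and follows essentially the same approach as the paper: a coordinatewise verification that splits into the ranges $i\ge 0$, $i=-1$, and the remaining negative indices, using the sliding-window rule \eqref{star} on the nonnegative side, the compatibility condition \eqref{starstar} at $i=-1$, and the iterated rule from Remark~\ref{rem1} for the deeper negative coordinates. Your write-up is in fact more explicit than the paper's about the role of \eqref{starstar} at the seam and about the propagation to $i\le -2$, but the underlying argument is the same.
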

	\begin{proof}
		Let $x=(\ldots x_{-1}; x_0,\, x_1, \ldots) \in \Sigma_{\mathcal{A},\mathcal{A}'}$  and  $y=(\ldots y_{-1}; y_0,\, y_1, \ldots) \in \Sigma_{\mathcal{C},\mathcal{C}'}$. Suppose $\psi(x)=y$ according to definition of 
		$\Psi$ 
		in \eqref{star} and \eqref{starstar}. For a word $x_{[-1 ,\,n-1]}\in  \beta^{\mathcal{A},\mathcal{A}'}_{n+1}$, a word $x_{[i ,\,i+n-1]}\in  \beta^{\mathcal{A},\mathcal{A}'}_{n}$ with $i\geq 0$ or  a word $x_{[i ,\,i+\ell-1]}\in  \beta^{\mathcal{A},\mathcal{A}'}_{\ell}$ with $i\leq -\ell$,
		\begin{equation*} 
			\begin{aligned}
				\left((\sigma_{\Sigma_{\mathcal{C},\mathcal{C}'}} \circ \psi)(x)\right)_{[i]} 
				&= \Psi \left(\sigma_{\Sigma_{\mathcal{A},\mathcal{A}'}}(x)_{[i,\, i+n-1]} \right) \\
				&= \Psi(x_{[i+1,\, i+n]}) \\
				&= (\psi \circ \sigma_{\Sigma_{\mathcal{A},\mathcal{A}'}}(x))_{[i]}. \\
				&= y_{[i+1]} \\
				&= (\sigma_{\Sigma_{\mathcal{C},\mathcal{C}'}} \circ \psi)_{[i]}.
			\end{aligned}
		\end{equation*}
		For a block $x_{[i ,\,i+\ell-1]}\in  \beta^{\mathcal{A},\mathcal{A}'}_{\ell}$, where $-\ell < i < -1$,  at most $\ell$ iteration of $x$ and $y$ will determine the  entries with negative indeces. Then the above disscusion finishes the proof. 
	\end{proof}

If $\psi:\Sigma_{\mathcal{A},\mathcal{A}'}\rightarrow \Sigma_{\mathcal{C},\mathcal{C}'}$ is onto, then call it a \textit{zip shift factor code} from 
$\Sigma_{\mathcal{A},\mathcal{A}'}$ onto $\Sigma_{\mathcal{C},\mathcal{C}'}$. If $\psi$ is invertible, then it is called a \textit{conjugacy}
from $\Sigma_{\mathcal{A},\mathcal{A}'}$ to $\Sigma_{\mathcal{C},\mathcal{C}'}$. 
Both maps $h_N(x)$ and $\gamma_{N}(x)$ are conjugacies between the space $\Sigma_{\mathcal{A},\mathcal{A}'}$ and 
spaces $h_N(\Sigma_{\mathcal{A},\mathcal{A}'})$
and $\gamma_N(\Sigma_{\mathcal{A},\mathcal{A}'})$ respectively. 
\begin{corollary}
	Let $\psi:\Sigma_{\mathcal{A},\mathcal{A}'}\rightarrow \Sigma_{\mathcal{C},\mathcal{C}'}$ be a sliding block code. Then if $p\in \Sigma_{\mathcal{A},\mathcal{A}'}$
	is a periodic point with period $k$, then the least period of $\psi(p)$ divides $k$. If $\psi$ is a conjugacy, then $p$ and $\psi(p)$ have the same period.
\end{corollary}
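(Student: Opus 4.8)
The plan is to reduce everything to the intertwining relation of Theorem \ref{comu} together with the characterization of a period-$k$ point as a fixed point of the $k$-th iterate of the zip shift map. First I would verify that, for $\sigma = \sigma_{\Sigma_{\mathcal{A},\mathcal{A}'}}$, a point $p$ has period $k$ in the sense of Definition \ref{defs} precisely when $\sigma^{k}(p)=p$. On the non-negative indices this is immediate, since $x_{ki+j}=p_j'$ for all $i\ge 0$ forces $x_{j+k}=x_j$; on the negative indices one uses that the entries there are obtained by applying $\varphi_n$ to windows of the (now $k$-periodic) non-negative part, so that these entries are themselves $k$-periodic and hence also fixed by $\sigma^{k}$. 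With this in hand, the least period of $p$ is exactly the smallest positive integer $\ell$ with $\sigma^{\ell}(p)=p$.

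Next I would promote the commutation $\psi\circ\sigma_{\Sigma_{\mathcal{A},\mathcal{A}'}}=\sigma_{\Sigma_{\mathcal{C},\mathcal{C}'}}\circ\psi$ of Theorem \ref{comu} to all iterates by a one-line induction, obtaining $\psi\circ\sigma_{\Sigma_{\mathcal{A},\mathcal{A}'}}^{\,k}=\sigma_{\Sigma_{\mathcal{C},\mathcal{C}'}}^{\,k}\circ\psi$ for every $k\ge 0$. Applying this to a period-$k$ point $p$ gives
\[
\sigma_{\Sigma_{\mathcal{C},\mathcal{C}'}}^{\,k}(\psi(p))=\psi\big(\sigma_{\Sigma_{\mathcal{A},\mathcal{A}'}}^{\,k}(p)\big)=\psi(p),
\]
so $\psi(p)$ is fixed by $\sigma_{\Sigma_{\mathcal{C},\mathcal{C}'}}^{\,k}$ and is therefore periodic. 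If $\ell$ denotes the least period of $\psi(p)$, then writing $k=q\ell+r$ with $0\le r<\ell$ and using $\sigma^{k}=\sigma^{r}\circ(\sigma^{\ell})^{q}$ yields $\sigma^{r}(\psi(p))=\psi(p)$; minimality of $\ell$ forces $r=0$, i.e. $\ell\mid k$. This proves the first assertion.

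For the conjugacy statement I would first note that a conjugacy $\psi$ has an inverse that also intertwines the two zip shift maps: composing $\psi\circ\sigma_{\Sigma_{\mathcal{A},\mathcal{A}'}}=\sigma_{\Sigma_{\mathcal{C},\mathcal{C}'}}\circ\psi$ with $\psi^{-1}$ on either side gives $\psi^{-1}\circ\sigma_{\Sigma_{\mathcal{C},\mathcal{C}'}}=\sigma_{\Sigma_{\mathcal{A},\mathcal{A}'}}\circ\psi^{-1}$. Since the divisibility argument above used only the intertwining relation and the fixed-point characterization of periods, the same reasoning applies verbatim to $\psi^{-1}$. Thus the least period $\ell_{\psi(p)}$ of $\psi(p)$ divides the least period $\ell_p$ of $p$, while applying the reasoning to $\psi^{-1}$ with base point $\psi(p)$ gives $\ell_p\mid\ell_{\psi(p)}$; hence $\ell_p=\ell_{\psi(p)}$ and $p$ and $\psi(p)$ share the same period.

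I expect the only genuinely delicate point to be the equivalence in the first step between the combinatorial Definition \ref{defs} of period $k$ and the dynamical condition $\sigma^{k}(p)=p$, because of the phenomenon noted in the remark following Definition \ref{defs} that the non-injectivity of $\varphi_n$ may make the repeating block on the negative indices shorter than the one on the non-negative indices. The safeguard is that \emph{least period} must be read as the least $\ell$ with $\sigma^{\ell}(p)=p$, which is governed by the non-negative tail, so that the possible shortening on the negative side never produces a smaller period and the divisibility bookkeeping goes through unchanged.
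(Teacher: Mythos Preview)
Your argument is correct and is exactly the intended one: the paper states this result as an unproved corollary of Theorem~\ref{comu}, and the standard derivation you give---iterate the intertwining relation, use the fixed-point characterization of period together with the division algorithm, and for the conjugacy case apply the same divisibility to $\psi^{-1}$---is precisely what that placement implies. Your extra care in reconciling Definition~\ref{defs} with the dynamical condition $\sigma^{k}(p)=p$ is appropriate and does not deviate from the paper's (implicit) reasoning.
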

The following theorem is a generalization of Curtis-Hedlund-Lyndon Theorem to zip shift maps.
	\begin{theorem}\label{ioi}
		Let $\Sigma_{\mathcal{A},\mathcal{A}'}$  and $\Sigma_{\mathcal{C},\mathcal{C}'}$ be two 
		zip shift spaces with transition maps $\varphi_n^{\mathcal{A},\mathcal{A}'}:\beta^{\mathcal{A}'}_n\longrightarrow \mathcal{A}$ 
		and $\varphi_m^{\mathcal{C},\mathcal{C}'}:\beta^{\mathcal{C}'}_m\longrightarrow \mathcal{C}$ respectively. 
		A map $\psi:\Sigma_{\mathcal{A},\mathcal{A}'}\rightarrow \Sigma_{\mathcal{C},\mathcal{C}'}$ is a zip shift sliding block code 
		if and only if $\psi \circ \sigma_{\Sigma_{\mathcal{A},\mathcal{A}'}}= \sigma_{\Sigma_{\mathcal{C},\mathcal{C}'}} \circ \psi$ 
		and $\psi(x)_{-1}$ is a function of $x_{[-1, \,n  -1]}$ and $\psi(x)_{0}$ is a function of $x_{[0, \,n  -1]}$ such that the block map $\Psi$ induced by $\psi$ satisfies \eqref{starstar}.
	\end{theorem}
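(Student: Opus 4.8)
The plan is to read this as a zip-shift analogue of the Curtis--Hedlund--Lyndon theorem, where the role played by continuity in the classical statement is taken over here by the explicit local-dependence hypothesis. Accordingly I would split the argument into the two implications and handle them asymmetrically.

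For the forward implication, suppose $\psi$ is a zip shift sliding block code. Then by Definition \ref{def slide} it is induced by a block map $\Psi$ obeying \eqref{star} and \eqref{starstar}. The requirement that $\psi(x)_{-1}$ depend only on $x_{[-1,\,n-1]}$ and $\psi(x)_0$ only on $x_{[0,\,n-1]}$ is read off directly from \eqref{star}, and \eqref{starstar} is part of the very hypothesis that $\Psi$ is a block map; the commutation $\psi\circ\sigma_{\Sigma_{\mathcal{A},\mathcal{A}'}}=\sigma_{\Sigma_{\mathcal{C},\mathcal{C}'}}\circ\psi$ is exactly Theorem \ref{comu}. So this direction is immediate and needs no new computation.

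For the converse, assume the commutation relation together with the stated local dependence and \eqref{starstar}. I would first use the local dependence to define a candidate block map: set $\Psi(x_{[0,\,n-1]}):=\psi(x)_0\in\mathcal{C}'$ and $\Psi(x_{[-1,\,n-1]}):=\psi(x)_{-1}\in\mathcal{C}$. These are well defined precisely because, by hypothesis, the two entries are functions of the displayed windows alone, so the value is independent of the choice of $x$ realizing a given window. It then remains to verify that $\psi$ coincides with the sliding block code induced by this $\Psi$ and that $\Psi$ is a genuine block map. For the non-negative indices I would propagate the rule for $\psi(x)_0$ using the commutation relation: iterating $\psi\circ\sigma_{\Sigma_{\mathcal{A},\mathcal{A}'}}=\sigma_{\Sigma_{\mathcal{C},\mathcal{C}'}}\circ\psi$ gives, for every $i\geq 0$, $\psi(x)_i=\big(\sigma_{\Sigma_{\mathcal{C},\mathcal{C}'}}^{\,i}\psi(x)\big)_0=\big(\psi\,\sigma_{\Sigma_{\mathcal{A},\mathcal{A}'}}^{\,i}x\big)_0=\Psi(x_{[i,\,i+n-1]})$, since on non-negative coordinates the zip shift acts as the ordinary left shift. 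Thus the non-negative half of $\psi(x)$ is produced by sliding $\Psi$, as required.

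The delicate part is the negative tail. For the coordinate $-1$ I would evaluate the commutation relation there: on one side $(\sigma_{\Sigma_{\mathcal{C},\mathcal{C}'}}\psi(x))_{-1}=\varphi_m^{\mathcal{C},\mathcal{C}'}(\psi(x)_0\cdots\psi(x)_{m-1})$, while on the other side $(\psi\,\sigma_{\Sigma_{\mathcal{A},\mathcal{A}'}}x)_{-1}=\Psi\big((\sigma_{\Sigma_{\mathcal{A},\mathcal{A}'}}x)_{[-1,\,n-1]}\big)$, and because $(\sigma_{\Sigma_{\mathcal{A},\mathcal{A}'}}x)_{-1}=\varphi_n^{\mathcal{A},\mathcal{A}'}(x_0\cdots x_{n-1})$ this is exactly the right-hand side of \eqref{starstar}. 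Equating the two expressions is precisely the compatibility condition \eqref{starstar}, so $\Psi$ is a block map. Finally, for coordinates $i\leq -2$ I would invoke the last item of Remark \ref{rem1}: with $\ell=\max\{n,m\}$, the $\ell$-fold application of diagram \eqref{diagram1} furnishes a well-defined rule converting each block in $\beta^{\mathcal{A}}_{\ell}$ into a block in $\beta^{\mathcal{C}}_{\ell}$, and since by \eqref{ZS} the negative entries of a zip shift point are governed by this collapse mechanism, the commutation relation forces the negative tail of $\psi(x)$ to agree with the sliding block code induced by $\Psi$. The \emph{main obstacle} I anticipate is exactly this negative-index bookkeeping: one must check that the single local rule $\Psi$, which directly controls only the coordinates $0$ and $-1$, propagates consistently through the zip/collapse dynamics to every negative coordinate, and that no compatibility beyond \eqref{starstar} is needed. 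Unlike the classical Curtis--Hedlund--Lyndon argument, no compactness input is required here, since the finite window of dependence is assumed outright rather than extracted from uniform continuity.
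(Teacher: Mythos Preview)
Your proposal is correct and follows essentially the same route as the paper's own proof: treat the forward direction as immediate from Definition~\ref{def slide} and Theorem~\ref{comu}, then for the converse define $\Psi$ from the assumed local dependence, propagate it over non-negative coordinates, and handle the negative tail via \eqref{starstar} together with the last item of Remark~\ref{rem1}. The paper's argument is only a few lines and invokes exactly these ingredients; your version is simply more explicit about using the commutation relation to push the rule along (and your computation at coordinate $-1$ even shows that \eqref{starstar} is forced by the other hypotheses, which the paper does not remark on).
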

	\begin{proof}
		We only need to prove the reverse direction. Let $\psi$ induce a block map $\Psi$ such that it is determined on blocks 
		$x_{[-1, \,n  -1]}$ and $x_{[0, \,n  -1]}$. Since it is defined on blocks $x_{[0, \,n  -1]}$, the definition of $\Psi$ naturally extends to blocks $y_{i}=\Psi(x_{[i,\, i+n-1]})$, $i\geq 0$. 
		Now according to \eqref{starstar} and Remark \ref{rem1}, the definiton extends to blocks of length $n+m+1$ containing negative indices. 
	\end{proof}
\begin{theorem}
	Let $\psi:\Sigma_{\mathcal{A},\mathcal{A}'}\rightarrow \Sigma_{\mathcal{C},\mathcal{C}'}$ be a zip shift sliding block code
	with the  block map $\Psi: \beta^{\mathcal{A},\mathcal{A}'}_{\ell}\rightarrow \mathcal{C}\cup \mathcal{C}'$.
	Then there exists a higher block zip shift space $\widetilde{\Sigma_{\mathcal{A},\mathcal{A}'}}$ of $\Sigma_{\mathcal{A},\mathcal{A}'}$, a conjugacy 
	$h: \Sigma_{\mathcal{A},\mathcal{A}'}\rightarrow \widetilde{\Sigma_{\mathcal{A},\mathcal{A}'}}$ and
	a 1-block code $\widetilde{\psi}:\widetilde{\Sigma_{\mathcal{A},\mathcal{A}'}}\rightarrow \Sigma_{\mathcal{C},\mathcal{C}'}$ such that 
	$\widetilde{\psi} \circ h=\psi$.
\end{theorem}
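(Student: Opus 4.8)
The plan is to imitate the classical recoding of a sliding block code as a $1$-block code on a higher block presentation, adapting it to the zip structure. First I would let $\ell$ be the length of the blocks on which the given block map $\Psi$ is defined and take $h:=h_\ell$, the $\ell$th higher block code. By Theorem \ref{h} the image $\widetilde{\Sigma_{\mathcal{A},\mathcal{A}'}}:=h_\ell(\Sigma_{\mathcal{A},\mathcal{A}'})=\Sigma_{\mathcal{A},\mathcal{A}'}^{[\ell]}$ is again a zip shift space, and by the remark following Theorem \ref{comu} the map $h_\ell$ is a conjugacy onto its image. This supplies the space $\widetilde{\Sigma_{\mathcal{A},\mathcal{A}'}}$ and the conjugacy $h$ with no further work.

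Next I would define $\widetilde{\psi}$ as a $1$-block code. The whole point of passing to window length $\ell$ is that a single symbol of $\widetilde{\Sigma_{\mathcal{A},\mathcal{A}'}}$ at position $i$ is, by definition, the entire block $x_{[i,\,i+\ell-1]}\in\beta^{\mathcal{A},\mathcal{A}'}_{\ell}$, so $\Psi$ now reads exactly one higher-block symbol on the non-negative side. Concretely, I would declare the block map of $\widetilde{\psi}$ to send the higher-block symbol $x_{[i,\,i+\ell-1]}$ to $\Psi(x_{[i,\,i+\ell-1]})\in\mathcal{C}'$ for $i\geq 0$; for the transition entry I would read the pair of overlapping higher-block symbols at positions $-1$ and $0$, namely $x_{[-1,\,\ell-2]}$ and $x_{[0,\,\ell-1]}$, which together recover the block $x_{[-1,\,\ell-1]}$ that $\Psi$ requires, and assign the corresponding symbol of $\mathcal{C}$. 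Thus $\widetilde{\psi}$ reads a single symbol on the positive side and honours the zip transition exactly as a $1$-block zip shift code must.

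With these definitions the identity $\widetilde{\psi}\circ h=\psi$ is immediate from a pointwise computation: for $x\in\Sigma_{\mathcal{A},\mathcal{A}'}$ one has $(h(x))_{[i]}=x_{[i,\,i+\ell-1]}$, hence $(\widetilde{\psi}(h(x)))_{[i]}=\Psi(x_{[i,\,i+\ell-1]})=(\psi(x))_{[i]}$, the last equality being the defining property of $\psi$ from Definition \ref{def slide}. To certify that $\widetilde{\psi}$ is genuinely a zip shift sliding block code I would invoke Theorem \ref{ioi}: $\widetilde{\psi}$ commutes with the zip shift maps (since $\psi$ does by Theorem \ref{comu} and $h$ is a conjugacy), and $\widetilde{\psi}(x)_{-1}$, $\widetilde{\psi}(x)_0$ depend only on the prescribed entries.

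The main obstacle I anticipate is verifying that the transition compatibility \eqref{starstar} survives the recoding, i.e.\ that there is a transition map on the higher-block alphabet making \eqref{starstar} hold for $\widetilde{\psi}$. This is precisely the delicate negative-index bookkeeping: I must check that the way $\varphi_n^{\mathcal{A},\mathcal{A}'}$, $\varphi_m^{\mathcal{C},\mathcal{C}'}$ and $\Psi$ interlock in \eqref{starstar} for $\psi$ is inherited by the straddling transition symbol of $\Sigma_{\mathcal{A},\mathcal{A}'}^{[\ell]}$, using the induced transition structure of the higher block zip shift from Theorem \ref{h}. Once this local commutation rule is established, the theorem follows.
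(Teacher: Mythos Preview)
Your approach is essentially identical to the paper's: both take $\widetilde{\Sigma_{\mathcal{A},\mathcal{A}'}}=\Sigma_{\mathcal{A},\mathcal{A}'}^{[\ell]}$, $h=h_\ell$, and set $\widetilde{\psi}=\psi\circ h^{-1}$, which becomes a $1$-block code because a single symbol of the higher block shift already carries the length-$\ell$ window that $\Psi$ needs. The paper's proof is a terse two-line version of exactly this, so your additional care with the transition compatibility \eqref{starstar} is simply a more detailed execution of the same argument rather than a different route.
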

\begin{proof}
	Take $\widetilde{\Sigma_{\mathcal{A}, \mathcal{A}'}}$ as $\Sigma_{\mathcal{A},\mathcal{A}'}^{[\ell]}$ and define $h$ to be 
	the $\ell$th higher block code $(h(x))_{[i]}=x_{[i,\, i+\ell-1]}$. The map $h$ is a conjugacy and so $\widetilde{\psi}=\psi \circ h^{-1}$ is a 1-block map.
\end{proof}
\begin{theorem}\label{th 2.12}
	\begin{itemize} 
		\item[i)] The image of a zip shift space under the zip shift sliding block code is a zip shift space.
		\item[ii)] A one-to-one and onto zip shift sliding block code has a zip shift sliding block code inverse.
	\end{itemize}
\end{theorem}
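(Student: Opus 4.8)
The plan is to establish the two statements separately, in each case combining the commutation relation of Theorem \ref{comu}, the compactness of zip shift spaces, and the zip shift Curtis--Hedlund--Lyndon criterion of Theorem \ref{ioi}. For part (i), let $\psi\colon\Sigma_{\mathcal{A},\mathcal{A}'}\to\Sigma_{\mathcal{C},\mathcal{C}'}$ be the zip shift sliding block code with block map $\Psi$. I would first observe that on non-negative indices $\Psi$ restricts to an ordinary one-sided sliding block code $\Phi$, sending $(x_i)_{i\geq0}\mapsto(y_i)_{i\geq0}$ with $y_i=\Psi(x_{[i,\,i+n-1]})$. By the classical image theorem for sliding block codes \cite{LM}, $\Phi(\Sigma_{\mathcal{A}'})$ is again a one-sided shift space; I take this to be the space of non-negative parts and let $Y'$ be the two-sided shift space with the same language. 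The key point is then that \eqref{starstar}, read as the local commutation rule encoded in diagram \eqref{diagram1}, forces every negative coordinate of $\psi(x)$ to be produced from the non-negative part precisely through $\varphi_m^{\mathcal{C},\mathcal{C}'}$, i.e.\ through the zipping prescription \eqref{ZS}. Hence $\psi(\Sigma_{\mathcal{A},\mathcal{A}'})$ coincides with the zip shift space built from $Y'$ and $\varphi_m^{\mathcal{C},\mathcal{C}'}$. To close part (i) I would note that this image is $\sigma$-invariant by Theorem \ref{comu} and closed because $\psi$ is continuous (each output coordinate depends on a finite window) and zip shift spaces are compact, so that the continuous image of a compact set is compact, hence closed.

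For part (ii), let $\psi$ be one-to-one and onto. Since both spaces are closed subsets of a product of finite discrete alphabet sets, they are compact and metrizable, and $\psi$ is continuous by the finite-window dependence of each output letter. A continuous bijection of compact Hausdorff spaces is a homeomorphism, so $\psi^{-1}$ is continuous. Composing the identity $\psi\circ\sigma_{\Sigma_{\mathcal{A},\mathcal{A}'}}=\sigma_{\Sigma_{\mathcal{C},\mathcal{C}'}}\circ\psi$ of Theorem \ref{comu} on both sides with $\psi^{-1}$ yields $\sigma_{\Sigma_{\mathcal{A},\mathcal{A}'}}\circ\psi^{-1}=\psi^{-1}\circ\sigma_{\Sigma_{\mathcal{C},\mathcal{C}'}}$, so $\psi^{-1}$ intertwines the two zip shift maps.

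It then remains to produce a block map for $\psi^{-1}$ meeting the hypotheses of Theorem \ref{ioi}: that $\psi^{-1}(y)_0$ is a function of $y_{[0,\,m'-1]}$ and $\psi^{-1}(y)_{-1}$ a function of $y_{[-1,\,m'-1]}$ for a suitable anticipation $m'$, with the induced block map satisfying \eqref{starstar}. Continuity already supplies a finite window for each output coordinate, and the commutation relation together with the left-shift action of $\sigma$ on non-negative indices lets me propagate this window uniformly along the positive axis. Once the window and \eqref{starstar} are verified, Theorem \ref{ioi} identifies $\psi^{-1}$ as a zip shift sliding block code and completes the proof.

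The step I expect to be the main obstacle is upgrading the two-sided finite window furnished by continuity to the one-sided, anticipation-only window demanded by Theorem \ref{ioi}. Concretely, this reduces to showing that the one-sided code $\Phi$ induced on non-negative parts is itself invertible, so that $\psi^{-1}(y)_0$ cannot depend on the negative coordinates of $y$. The difficulty is that the zip shift map is only finite-to-one, so the standard device of normalizing the window position by applying $\sigma^{-1}$ is unavailable; I would instead analyze how $\psi$ acts fiberwise over the non-negative coordinates and argue that the bijectivity of $\psi$ forces these fibers to match, thereby forcing $\Phi$ to be injective. With the window structure secured, the compatibility \eqref{starstar} for $\psi^{-1}$ follows by reading diagram \eqref{diagram1} backwards, and the conclusion is immediate from Theorem \ref{ioi}.
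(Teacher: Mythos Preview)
Your overall strategy matches the paper's: both parts are reduced to verifying the hypotheses of Theorem~\ref{ioi} after establishing the commutation relation, with compactness supplying the finite-window dependence. The execution differs in two ways worth noting.

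For (i), the paper does not invoke the classical one-sided image theorem. Instead it runs a bare-hands diagonal extraction: given $y$ in the target, each central block $y_{[-k+n,\,k-n]}$ is the $\Psi$-image of a central block of some $x^{(k)}\in\Sigma_{\mathcal{A},\mathcal{A}'}$; finiteness of the alphabet yields nested infinite index sets $S_k$ along which the central blocks stabilize, and the limit point $x$ satisfies $\psi(x)=y$. Your route via the classical theorem plus the observation that \eqref{starstar} forces the zip structure on negative coordinates is more modular, while the paper's is self-contained.

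For (ii), the paper again avoids the compact-Hausdorff homeomorphism lemma and argues by contradiction with a second diagonal extraction: assuming that for every $t$ there exist $y^{(t)},\tilde y^{(t)}$ agreeing on $[-t,t]$ with $\xi(y^{(t)})_0\neq\xi(\tilde y^{(t)})_0$, it builds limit points $x\neq\tilde x$ with $\psi(x)=\psi(\tilde x)$, contradicting injectivity. Observe, however, that this argument as written only shows that $\xi(y)_0$ is a function of a \emph{two-sided} window $y_{[-t,t]}$, not of the anticipation-only window $y_{[0,m-1]}$ that Theorem~\ref{ioi} actually requires. So the obstacle you single out---upgrading a two-sided window to a one-sided one when $\sigma$ is not invertible---is genuine, and the paper's argument does not explicitly close it either. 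Your proposed reduction to injectivity of the induced one-sided code $\Phi$ is the natural way to finish; carrying it through is the real content that both your outline and the paper's proof leave implicit.
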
 
	\begin{proof}
		The proofs of parts i and ii are similar to the proofs of Theorems 1.5.13 and 1.5.14 \cite{LM} respectively.
		
		(i). Let $\Sigma_{\mathcal{A},\mathcal{A}'}$ and $\Sigma_{\mathcal{C},\mathcal{C}'}$ be two 
		zip shift spaces with block map  $\Psi: \beta^{\mathcal{A},\mathcal{A}'}_{\ell}\rightarrow \mathcal{C}\cup \mathcal{C}'$, and $\psi:\Sigma_{\mathcal{A},\mathcal{A}'}\rightarrow \Sigma_{\mathcal{C},\mathcal{C}'}$ be a zip shift sliding block code. 
		According to Definition \ref{def slide} and  Remark \ref{rem1},
		for  $x\in \Sigma_{\mathcal{A},\mathcal{A}'}$,  any block in $\psi(x)$ belongs to  $\beta^{\mathcal{C},\mathcal{C}'}$. Therefore, $\psi(\Sigma_{\mathcal{A},\mathcal{A}'}) \subseteq \Sigma_{\mathcal{C},\mathcal{C}'}$.
		
		To prove the inverse, Observe that according to Remark \ref{rem1} and Theorem \ref{ioi}, the definition of block map $\Psi$ can extend to map centeral blocks $x_{[-k,k]}$ where $k\geq n+m-2$. Now for any $k$ the centeral $(2k-2n+1)$-block of $y$ is the $\Psi$-image of the   centeral $(2k+1)$-block of some point $x^{(k)}\in \Sigma_{\mathcal{A},\mathcal{A}'}$. Since $\mathcal{C}\cup \mathcal{C}'$ contains finite symbols, therefore, there exixts an infinite set $S_k$ of integers such that $x^{(n)}_{[-k,k]}$ is the same for all $n\in S_k$.
		Continiouing this procedure and increasing $k$, we find nested subsets of infinite sets as $\cdots \subseteq S_{k+1}\subseteq S_{k}$. Define $x_{[-k,k]}=x^{(n)}_{[-k,k]}$ for all $n\in S_k$. Therefore, for any $k\geq n+m-2$ we have
		\begin{equation*}
			\Psi(x_{[-k,k]})=\Psi (x^{(n)}_{[-k,k]})= \psi (x^{(n)})_{[-k,k]}=y_{[-k,k]}.
		\end{equation*}
		This means $\psi(x)=y$ and $\Sigma_{\mathcal{C},\mathcal{C}'} \subseteq \psi(\Sigma_{\mathcal{A},\mathcal{A}'})$.
		
		(ii) Let $\psi:\Sigma_{\mathcal{A},\mathcal{A}'}\rightarrow \Sigma_{\mathcal{C},\mathcal{C}'}$ be a one-to-one and onto zip shift sliding block code and $\xi: \Sigma_{\mathcal{C},\mathcal{C}'}\rightarrow \Sigma_{\mathcal{A},\mathcal{A}'}$ be the inverse of $\psi$. From $\psi \circ \sigma_{\Sigma_{\mathcal{A},\mathcal{A}'}}= \sigma_{\Sigma_{\mathcal{C},\mathcal{C}'}} \circ \psi$ and $y=\psi(x)$, we have
		\begin{equation*}
			\sigma_{\Sigma_{\mathcal{A},\mathcal{A}'}}(\xi (y))= \sigma_{\Sigma_{\mathcal{A},\mathcal{A}'}}(x)= \xi(\psi (\sigma_{\Sigma_{\mathcal{A},\mathcal{A}'}})) = \xi (\sigma_{\Sigma_{\mathcal{C},\mathcal{C}'}}(\psi(x)))= \xi (\sigma_{\Sigma_{\mathcal{C},\mathcal{C}'}}(y)).
		\end{equation*}
		Therefore, $\xi \circ \sigma_{\Sigma_{\mathcal{C},\mathcal{C}'}}= \sigma_{\Sigma_{\mathcal{A},\mathcal{A}'}} \circ \psi$.
		Now according to Theorem \ref{ioi}, it suffices to determine $\xi(y)_{[-1]}$ and $\xi(y)_{[0]}$ over blocks $y_{[-1, m-1]}$ and $y_{[0, m-1]}$ respectively. 
		
		Observe that if there exists a block map $\Xi$ defined over blocks $\xi(y)_{[0]}$ and $\xi(y)_{[-1]}$, 
		then its definition would  extend over any block $\xi(y)_{[-t , t]}$, $t\geq m-1$. We prove the theorem for index 0 and the proof for  index -1 is the same. By contradiction suppose that for any $t\geq 0$, there are two points
		$y^{(t)}$ and $\tilde{y}^{(t)}$ in $\Sigma_{\mathcal{C},\mathcal{C}'}$ such that 
		$y^{(t)}_{[-t, t]}=\tilde{y}^{(t)}_{[-t, t]}$, but $\xi (y^{(t)})_{[0]}\neq  \xi (\tilde{y}^{(t)} )_{[0]}$.
		Let $\xi (y^{(t)})= x^{(t)}$ and 
		$\xi (\tilde{y}^{(t)})= \tilde{x}^{(t)}$. Since $\mathcal{A}\cup \mathcal{A}'$ is finite, there would be
		distinct symbols $a\neq b$ and an infinite set $S_0$ of integers so that $ x^{(t)}_{[0]}=\xi (y^{(t)})_{0}=a$ and
		$\tilde{x}^{(t)}_{[0]}=\xi (\tilde{y}^{(t)} )_{0}=b$ for all $t\in S_0$. Then choose an infinite subset 
		$S_1\subset S_0$ so that  $x^{(t)}_{[-1,1]}$ are equal for $t\in S_1$ and $\tilde{x}^{(t)}_{[-1,1]}$ are equal for
		$t\in S_1$.
		Continuing this way, for each $k\geq 1$, we would find  infinite subset $S_k\subset S_{k+1}$.
		
		As in the proof of part (i), we construct points $x$ and $\tilde{x}$ in $\Sigma_{\mathcal{A},\mathcal{A}'}$ such that 
		$x_{[-t,t]}=x^{(k)}_{[-t,t]}$ and $\tilde{x}_{[-t,t]}=\tilde{x}^{(k)}_{[-t,t]}$ for all $n\in S_k$. Note that $x_{[0]}=a\neq b= \tilde{x}_{[0]}$. Therefore, $x\neq \tilde{x}$. Now for $t\in S_k$ and $t\geq k$. we have
		\begin{equation*} 
			\begin{aligned}
				\Psi (x_{[-k,k]}) &= \Psi (x^{(t)}_{[-k,k]}) = \psi (x^{(t)})_{[-k,k]} = y^{(t)}_{[-k,k]}\\
				&= \tilde{y}^{(t)}_{[-k,k]} =\psi (\tilde{x}^{(t)})_{[-k,k]} = \Psi (\tilde{x}^{(t)}_{[-k,k]}) = \Psi (\tilde{x}_{[-k,k]}).
			\end{aligned}
		\end{equation*}
		This would imply that $\psi (x) =\psi (\tilde{x})$ which is a contradiction.
		
		If condition \eqref{starstar} does not hold for $\Xi$, it contradicts the map $\xi$ to be one-to-one.
	\end{proof}

\begin{remark}
	\begin{itemize}
		\item
		Let $\psi:\Sigma_{\mathcal{A},\mathcal{A}'}\rightarrow \Sigma_{\mathcal{C},\mathcal{C}'}$ be a one-to-one sliding block code induced by $\Psi$. Note that the map $\Psi$ does not need to be one-to-one. 
		\item For $\ell\geq n$ and $\ell'\geq m$, the block map $\Psi$ can be defined as 
		$$\Psi: \beta_{\ell}^{\mathcal{A},\mathcal{A}'}\rightarrow \beta_{\ell'}^{\mathcal{C},\mathcal{C}'}$$
		with $\Psi(x_{[i,\, i+\ell-1]})=y_{[i,\, i+\ell'-1]}.$ According to the indices, the letters in $y_{[i,\,i+\ell'-1]}$ are chosen from $C\cup C'$.
	\end{itemize}
\end{remark}

\section{Pre-images of points under zip shift map}\label{sec3pre}
Let $\Sigma\subseteq \Sigma_{\mathcal{A},\mathcal{A}'}$ be an SFT or strictly sofic space and 
$\sigma : \Sigma \rightarrow \Sigma$ be a zip shift map over  $\Sigma$. 
For a point $x\in \Sigma$, we aim to find the set of  all pre-images of it under $\sigma$. To do this,  
we construct a labeled graph called ``backward labeled graph'' which shows the backward dynamics of the points in $\Sigma_{\mathcal{A},\mathcal{A}'}$.

Let $\varphi_n: \beta_n^{\mathcal{A}'} \rightarrow \mathcal{A}$ be  the transition map. If $a_i\in \mathcal{A}$,   $a_i'\in \mathcal{A}'$ and
\begin{equation}\label{x}
	x=(\ldots, a_{-2},\, a_{-1};\, a_0',\, a_1',\, a_2',\, \ldots )\in \Sigma,
\end{equation}
then
$$\sigma (x)=(\ldots, a_{-2},\, a_{-1},\, a;\, a_1',\, a_2',\, \ldots ),$$
where  $a=\varphi_n(a_0'\, a_1'\, \ldots a_{n-1}')\in \mathcal{A}$.
We let 
\begin{equation}\label{-1}
	\sigma^{-1}(x)=(\ldots, a_{-2};\, a',\, a_0',\, a_1',\, a_2',\, \ldots ),
\end{equation}
where  $a'$ is an ``appropriate" pre-image of  $a_{-1}\, a_0'\, a_1' \ldots a_{n-2}'$  under $\varphi_n$. Sometimes, deciding which pre-image is appropriate is not easy at all. 
\begin{example}\label{eg p }
	Consider the simple graph in Figure \ref{Fpre-image} with $\mathcal{A}'=\{1,2,3,4,5,6\}$ and $\mathcal{A}=\{a,b,c,d,e\}$. 
	Set $\varphi_1(1)=a$, $\varphi_1(2)=\varphi_1(3)=b$, $\varphi_1(4)=c$, $\varphi_1(5)=d$ and $\varphi_1(6)=e$.
	The space $\Sigma_{\mathcal{A},\mathcal{A}'}$ is sofic. Let
	$$x=(\ldots, b\, ,b\, ,b;\, 1,\, 6,\ldots)\in \Sigma_{\mathcal{A},\mathcal{A}'}.$$
	Since $\varphi_{1}^{-1}(b)=\{2,\,3\}$, we cannot decide which one to choose in order to determine $\sigma^{-1}(x)$.
	In fact, we should trace back the sequece of $b$'s in negative indices up to an entry $x_{-k}$ which is $c$ or $e$. 
	If $x_{-k}=c$, then $y=\sigma^{-1}(x)=(\ldots ,b\, ,b;\, 2,\,1,\, 6,\ldots)$, and if  $x_{-k}=e$, 
	then $z=\sigma^{-1}(x)=(\ldots  ,b\, ,b;\, 3,\,1,\, 6,\ldots)$. If there does not exist $k$ with $x_{-k}=c$ or $e$, then both points $y$ and $z$ are pre-images of $x$.
\end{example}
\begin{figure}
%
%
%
\includegraphics[width=0.45\textwidth]{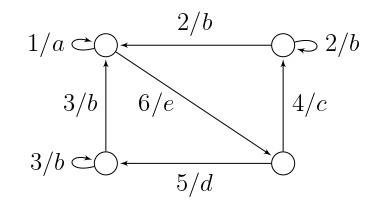}
\caption{This graph represents a zip shift space $\Sigma_{\mathcal{A},\mathcal{A}'}$ with $\mathcal{A}'=\{1,2,3,4,5,6\}$ and $\mathcal{A}=\{a,b,c,d,e\}$. 
		Walking in direction of edges and considering the numbers on edges, we get entries with non-negative indices and walking in opposite direction of edges and picking
		the letters written on edges, we get the entries with negative indices of a point $x\in \Sigma_{\mathcal{A},\mathcal{A}'}$.}
	\label{Fpre-image}
\end{figure}

In sequel we want to find the pre-images of a given point $x\in \Sigma_{\mathcal{A},\mathcal{A}'}$ through the backward labeled graph associated to
$\Sigma_{\mathcal{A},\mathcal{A}'}$.
We consider two cases when $\Sigma_{\mathcal{A}'}$ is an $M$-step Markov space or when $\Sigma_{\mathcal{A}'}$ is a strictly sofic space.	
\subsection{Constructing the labeled  and backward labeled graphs for sofic zip shift spaces}\label{sect}

In \cite{LM},  $M$-step Markov spaces and sofic spaces are represented by  graphs. 
A vertex graph or an edge  graph associates to the first space and a  labeled graph represents the second one. 
Here, we correspond a  graph to the sofic zip shift space which carries labels both on its vertices and edges. Call it \textit{labeled graph} for zip shift space.
The  \textit{backward labeled graph} can be obtained from the labeled graph by reversing the direction of edges. 
It is easy to construct the labeled graph when $\Sigma_{\mathcal{A}'}$ is an $M$-step Markov space. If $\Sigma_{\mathcal{A}'}$ is strictly sofic, some modifications will be made. 

$\mathbf{Case\ 1}.$ Let $\Sigma_{\mathcal{A}'}$ be an $M$-step Markov space. If $n=M=1$ ($n$ is due to map $\varphi_n$), then take the edge graph representing 
$\Sigma_{\mathcal{A}'}$. Here, the labels on the edges are taken from $\mathcal{A}'$. If the label of an edge is $a'$, then replace it with $a' / \varphi_1(a')$.

If $n=1$, set $\mathcal{V}(H)=\beta_M^{\mathcal{A}'}$ as the vertex set of the labeled graph. 
There is a directed edge $e$ from $v_1=a_1' a_2'\ldots a_{M}'$ to $v_2=a_2'\ldots a_{M}'a_{M+1}'$ if $a_1'a_2'\ldots a_{M}'a_{M+1}'\in \beta_{M+1}^{\mathcal{A}'}$. 
Label this edge by 
$$a_{M+1}' / \varphi_1(a_1').$$
If there are $m$ letters $b_1'$, $\ldots$, $b_m'\in \mathcal{A}'$ such that $b_j'a_2'a_3'\ldots a_{M+1}'\in \beta_{M+1}^{\mathcal{A}'}$, $1\leq j\leq m$, 
then $m$ edges  enters to the vertex $a_2'a_3'\ldots a_{M+1}'$ with labels $a_{M+1}'/ \varphi_1(b_j')$.
Note that the letters $b_j'$ are different, however the sofic parts $\varphi_1(b_j')$ may be the same for some edges.

If $n\geq 2$, let $\ell=\max \{n-1,\, M\}$.  Set $\mathcal{V}(\mathcal{H})=\beta_{\ell}^{\mathcal{A}'}$. 
There exists an edge $e$ from $v_1=a_1'\ldots a_{\ell}'$ to $v_2=a_2'a_3'\ldots a_{\ell+1}'$ 
if $a_1'a_2'\ldots a_{\ell}'a_{\ell+1}'\in \beta_{\ell+1}^{\mathcal{A}'}$.  Label this edge by 
$a_{\ell+1}' / \varphi_n (a_1'\ldots a_{n}')$.

$\mathbf{Case\ 2}.$ Let $\Sigma_{\mathcal{A}'}$ be a strictly sofic shift space. 
Call a graph a \textit{vertex-like graph} if it is a vertex graph such that some of the vertices carry the same label. See Figure \ref{Fsofic}.
\begin{lemma}\label{lem}
	Let $X$ be a sofic shift space.  
	There exists a directed vertex-like graph $H$ which  represents $X$.
\end{lemma}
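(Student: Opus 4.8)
The plan is to take the standard edge-labeled graph presentation of the sofic shift $X$ and transfer its labels from edges to vertices by passing to the graph whose vertices are the edges of the presentation.

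Since $X$ is sofic, the results recalled from \cite{LM} give a finite directed graph $G$ together with a labeling $\mathcal{L}\colon E(G)\to\mathcal{A}$ of its edges (where $\mathcal{A}$ is the alphabet of $X$) such that $(G,\mathcal{L})$ presents $X$, meaning
$$X=\{(\mathcal{L}(e_i))_{i\in\mathbb{Z}}\ :\ (e_i)_{i\in\mathbb{Z}}\text{ is a bi-infinite walk in }G\}.$$
After removing the finitely many vertices and edges that lie on no bi-infinite walk, I may assume $G$ is essential, i.e. every vertex has at least one incoming and one outgoing edge; this does not alter the set of label sequences and hence leaves $X$ unchanged.

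I would then define $H$ as follows: set $\mathcal{V}(H)=E(G)$, attach to each vertex $e$ of $H$ the label $\mathcal{L}(e)\in\mathcal{A}$, and draw a directed edge from $e$ to $f$ exactly when $ef$ is a path in $G$, that is, when the terminal vertex of $e$ equals the initial vertex of $f$. This is precisely the passage from the edge shift of $G$ to its vertex presentation, now carrying the labels inherited from $\mathcal{L}$.

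It remains to check that $H$ represents $X$ and that it is vertex-like. By construction of the edges of $H$, a bi-infinite walk $(e_i)_{i\in\mathbb{Z}}$ through the vertices of $H$ is the same datum as a bi-infinite walk through the edges of $G$, and reading off the vertex labels of such a walk returns exactly $(\mathcal{L}(e_i))_{i\in\mathbb{Z}}$; hence the set of vertex-label sequences read along bi-infinite walks in $H$ is precisely the right-hand side displayed above, namely $X$. Finally, since $\mathcal{L}$ need not be injective — and for a strictly sofic $X$ it cannot be — there can be distinct vertices $e\neq f$ of $H$ with $\mathcal{L}(e)=\mathcal{L}(f)$, so $H$ is a vertex graph in which some vertices share a label, i.e. a vertex-like graph. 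The only step demanding care, and the main obstacle, is verifying this bijective walk correspondence together with the label matching; reducing to an essential $G$ is harmless because a vertex or edge on no bi-infinite walk contributes no label sequence, and the remainder is bookkeeping.
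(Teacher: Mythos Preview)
Your proof is correct and follows essentially the same route as the paper: pass to the line graph of a labeled presentation $(G,\mathcal{L})$ of $X$, so that edges of $G$ become vertices of $H$ carrying their old labels, and walks correspond bijectively. The only difference is that the paper first performs a state-splitting on $G$ to eliminate parallel edges before forming the line graph, whereas you go directly; your approach is cleaner because the line graph of any directed graph is automatically simple (there is at most one edge from $e$ to $f$, determined by whether $t(e)=i(f)$), so the splitting step is unnecessary for the statement as written.
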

\begin{proof}
	Let ${G}$ be a labeled graph representing $X$. Let $e$ be an edge from vertex $v'$ to $w'$.  Denote by $i(e)$ and $t(e)$ the initial and terminal vertices of $e$, 
	i.e., $i(e)=v'$ and $t(e)=w'$.  The graph ${G}$ might have at least two edges $e_1$ and $e_2$ with $i(e_1)=i(e_2)=I$ and $t(e_1)=t(e_2)=J$.
	As described in Section 2.4, \cite{LM}, we can split $I$ into two vertices $I_1$ and $I_2$ such that $i(e_1)=I_1$ and $i(e_2)=I_2$.
	For all such vertices we split initial vertices into two or more vertices until all of the edges in the new graph ${G}'$ have different initial vertices o different terminal vertices.  Then $X_{{G}}=X_{{G}'}$ \cite{LM}.
	The graph  ${G}'$  contains some edges with the same label. If ${G}'$ has $k$ such  edges, rename all edges with $e_1$ to $e_k$. 
	There exists a vertex graph conjugate  to ${G}'$ with vertices $e_1$ to $e_k$. Again rename each $e_i$ by the original
	label as in ${G}'$. This vertex-like graph ${H}$ has some vertices with the same label and represents $X$.
\end{proof}

For  $n\geq 1$, let $\mathcal{H}'$ be the vertex-like graph with vertex set $\mathcal{V}(\mathcal{H}')=\beta_{\ell}^{\mathcal{A}'}$, $\ell\geq n-1$ 
representing $\Sigma_{\mathcal{A}'}$ (for $n=1$, take $\ell\geq 1$ and).  For $n=1$, one can also consider the labeled graph of $\Sigma_{\mathcal{A}'}$.
Suppose $v_1$ and $v_2$ are two vertices in $\mathcal{G}'$ and $e$ is an edge from $v_1$ to $v_2$. 
Let $v_2$ end with letter $a'\in\mathcal{A}'$. 
Then label $e$ by $a'\, / \varphi_n(w_{[1,n]})$ where $w=v_1a'$.

\begin{example}\label{sofic eg}
	Let $\mathcal{A}'=\{ 0,1\}$. The space $\Sigma_{\mathcal{A}'}$ with forbidden block set $\mathcal{F}=\{ 10^{2n-1}1:\ \ n\in \mathbb{N}\}$ is called even shift. 
	Its labeled graph is shown in Figure \ref{Fsofic} parts a and b. 
	There is a condition  on the number of symbols zero between two successive 1's. 
	To show this space with a vertex-like graph, it suffices to duplicate the vertex with label zero. The new graph  illustrated in Figure \ref{Fsofic}c, is a vertex-like graph for the even shift.
	
	If the vertex set is $\beta_2^{\mathcal{A}'}$, then another representation of the even shift by a vertex-like graph is as in Figure \ref{Fsofic}e. Let $\mathcal{A}=\{a,\,b,\,c\}$ and 
	$\varphi_2(11)=a$, $\varphi_2(10)=\varphi_2(01)=b$, and $\varphi_2(00)=c$. 
	Its  labeled graphs are shown in Figure \ref{Fsofic}, parts d and f.
\end{example}
\begin{figure}
	\centering
	\includegraphics[width=0.8\textwidth]{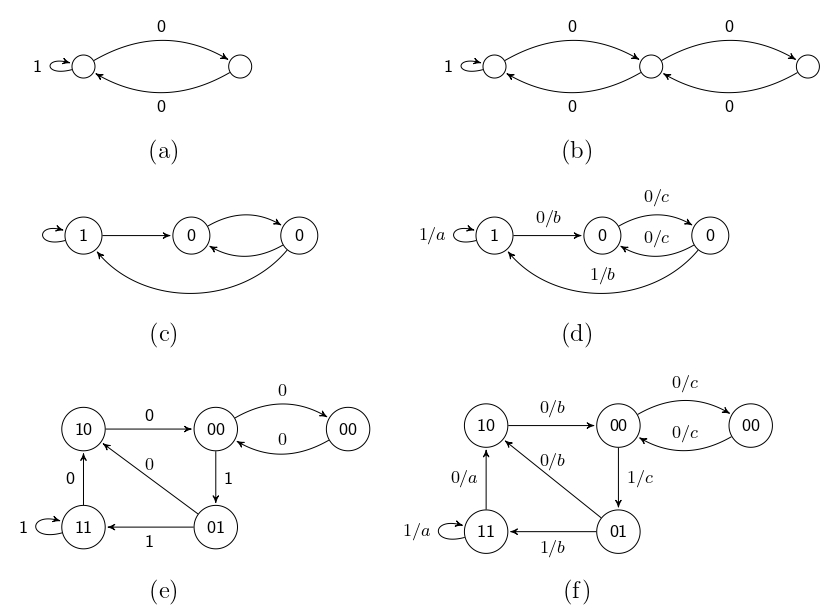}
	\caption{Graphs (a), (b), (c), and (e) represent the labeled or vertex-like graphs for the even shift $\Sigma_{\mathcal{A}'}$. Graphs (d) and (f) represent the labeled graphs for $\Sigma_{\mathcal{A},\mathcal{A}'}$ with the transition map described in Example \ref{sofic eg}.}
	\label{Fsofic}
\end{figure}	
\subsection{Walking on graphs associated to a sofic space $\Sigma_{\mathcal{A},\,\mathcal{A}'}$}\label{sec:3.2}
For a  sofic space $\Sigma_{\mathcal{A},\,\mathcal{A}'}$ with  transition map $\varphi_n: \beta_n^{\mathcal{A}'} \rightarrow \mathcal{A}$,  
let $\mathcal{H}$ and $\mathcal{G}$ be a labeled and a backward labeled graph with vertex set 
$\beta_{\ell}^{\mathcal{A}'}$ for $\ell\geq n-1$ respectively.
Let $e_j$ connect the vertex $v_j$ to $v_{j+1}$, carrying the label $a_j' / a_j$. 
For a label $a_j' / a_j$ with $a_j'\in \mathcal{A}'$ and $a_j\in \mathcal{A}$, we call $a_j'$ and $a_j$  the \textit{first} and the \textit{sofic part} of the label. 
Walks or paths on these graphs can have finite or infinite length.

First consider the graph $\mathcal{H}$.
Any finite segment $\pi=e_i\cdots e_{i+m}$ of a path in the direction of edges  is a concatenation  of edges
$e_j$ with $t(e_j)=i(e_{j+1})$ for $0\leq i\leq j\leq i+m-1$. This path corresponds to a block $a'_i\ldots a'_{i+m}\in \beta^{\mathcal{A}'}_{m+1}$ 
where $a'_i$ is the first label of $e_i$.

The graph $\mathcal{G}$ is constructed  from $\mathcal{H}$ by  reversing  the direction of edges without changing the lanels of edges.
Walking in opposite direction of edges in $\mathcal{H}$ or in the direction of edges in $\mathcal{G}$, any finite 
segment $\pi=e_{i+m}\cdots e_{i}$ of a path  is a concatenation  of edges
$e_j$ with $t(e_j)=i(e_{j+1})$ for $i,\, m>0$ and $i\leq j\leq i+m-1$.
This path starts from $e_i$ and ends at $e_{i+m}$.
The indices are sorted this way to show their dependency 
to the entries $(\ldots, a_{-(i+m)},\ldots, a_{-(i+1)},\,a_{-i},\ldots ;\ldots)$.   
This path corresponds to a word $a_{-(i+m)}\ldots a_{-i}\in \beta^{\mathcal{A}}_{m+1}$ 
where $a_{-i}$ is the sofic label of $e_i$.
The block  $L:=a_{-(i+m)}\ldots a_{-i}$ is called the \textit{sofic label} of path $\pi$ and is denoted by $\mathcal{L}(\pi)$.  
Denote by $i(\pi)$ and $t(\pi)$ the initial and terminal vertices of $\pi$. For both above paths $\pi$, $i(\pi)=e_i$ and $t(\pi)=e_{i+m}$.
	
	\begin{definition}\label{2.4}
		Let $\mathcal{G}$ be a labeled  backward  graph for the sofic space $\Sigma_{\mathcal{A},\mathcal{A}'}$.
		\begin{itemize}
			\item Let $v'$ be a vertex of $\mathcal{G}$ and $L$  be an admissible sofic label. 
			A path $\pi=e_{d}e_{d-1}\ldots e_{1}$ emanating  from $v'$ has \textit{delay} $d$ if $d$ is the smallest natural number  
			and $\pi$ is  the  unique path such that $\mathcal{L}(\pi)=L$.
			\item Let $v'$ be a vertex of $\mathcal{G}$ and let $L$  be an admissible sofic label of  paths $\pi_1, \ldots, \pi_m$, $m\geq 2$  all of length
			$\ell$ with  $i(\pi_1)=\ldots =i(\pi_m)=v'$ and $t(\pi_1)=\ldots =t(\pi_m)$. Call these paths  \textit{left-closing} paths of length $\ell$ if $\ell$ is 
			the smallest natural number and $\pi_j$'s are the only paths emanating from $v'$ having sofic label $L$ (see Figures \ref{l1} and \ref{Fclosing}).
			\item  
			A sofic label $L$ is called \textit{$d_L$-distinguishable} if the number $d$ and the label $L$ determines a 
			unique vertex $v'$ and a unique path $\pi$ such that $i(\pi)=v'$ and $\pi$ has delay $d$. 
			\item A sofic label $L$ is called \textit{$L$-distinguishable} if there exist a unique vertex $v'$ and some paths $\xi_j=\ldots e_3^j e_2^j e_1^j$, $1\leq j\leq t$ where $i(\xi_j)=v'$ and
			$\mathcal{L}(\xi_j)=L$.
			\item Suppose there are vertices $v_1'$ to $v_s'$, $s\geq 2$ with label $a_0'\ldots a_{n-1}'$ and for sofic label $L$,
			each of these vertices is the initial vertex of at least one of the  paths $\pi_j$, $1\leq j\leq m$, $m\geq 2$ carrying the label $L$.
			If all of these paths have the same endpoint $w'$, then they are called \textit{sofic-left-closing}.
		\end{itemize}
	\end{definition}	
Let $\mathcal{G}$ be a  backward labeled graph  for $\Sigma_{\mathcal{A},\mathcal{A}'}$.
To obtain the pre-images of a point $x=(\ldots, a_{-2},\, a_{-1};\, a_0',\, a_1',\,  \ldots )$, first  find the vertex $v'$ or 
vertices $v_1'$ to $v_s'$ which determine the starting state according to the sequence $ (a_0',\, a_1',\, a_2',\, \ldots )$. 
Suppose there are more than one edge with label $a_{-1}$ emanating   from such vertices. It might happen that only some of the paths starting with these edges 
have label $(\ldots, a_{-2},\, a_{-1})$ to form the pre-images of point $x$. In some cases it is very hard to verify if a finite part of $(\ldots, a_{-2},\, a_{-1})$ 
suffices to decide whether the paths carrying such label produce the pre-images of $x$ or not.
\begin{theorem}\label{main2}
	Let $\varphi_n: \beta_n^{\mathcal{A}'} \rightarrow \mathcal{A}$ be an onto factor map. 
	If $\Sigma_{\mathcal{A}'}$ is  a sofic space, then the pre-images of a point 
	$x\in \Sigma_{\mathcal{A},\mathcal{A}'}$ under $\sigma$ can be determined by its  backward labeled graph according to the following rules.
	\begin{itemize}
		\item[i)] If $\Sigma_{\mathcal{A}'}$ is an $M$-step Markov space, and a sofic label $L$ occurs with delay $d$ or is the label of left-closing paths, then all of these paths form
		the pre-images of $x$. Otherwise, all paths with sofic label $\mathcal{L}=\ldots a_{-3}\,a_{-2}\,a_{-1}$ produce the pre-images of $x$.
		\item[ii)]  If $\Sigma_{\mathcal{A}'}$ is a strictly sofic space, and a sofic label $L$ is $d_L$-distinguishable, $L$-distinguishable or is the label of sofic-left-closing paths,
		then all of these paths form    the pre-images of $x$. Otherwise, all paths with sofic label $\mathcal{L}=\ldots a_{-3}\,a_{-2}\,a_{-1}$ produce the pre-images of $x$.
	\end{itemize}
\end{theorem}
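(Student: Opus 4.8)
The plan is to reduce the determination of $\sigma^{-1}(x)$ to a combinatorial problem about backward walks on $\mathcal{G}$. First I would make precise what a preimage is. Writing $x=(\ldots,a_{-2},a_{-1};a_0',a_1',\ldots)$ and using \eqref{-1}, any point $z$ with $\sigma(z)=x$ has non-negative part $(a',a_0',a_1',\ldots)$ and negative part $(\ldots,a_{-3},a_{-2})$, where $a'\in\mathcal{A}'$ is constrained by $\varphi_n(a'a_0'\ldots a_{n-2}')=a_{-1}$. The crucial observation is that the negative part of $z$ is entirely prescribed by $x$ (it is the negative part of $x$ with $a_{-1}$ deleted), so $z$ is determined by the single new symbol $a'$. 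Hence the set of preimages of $x$ is in bijection with the set of admissible values of $a'$.

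Next I would identify these admissible values with data on $\mathcal{G}$. By \eqref{ZS}, choosing $a'$ is the same as choosing the hidden coordinate $y_{-1}$ of a two-sided lift $y$ of $x$, that is, a bi-infinite walk on the graph of $\Sigma_{\mathcal{A}'}$ whose non-negative part is $a_0'a_1'\ldots$ and whose windows $\varphi_n(y_i\ldots y_{i+n-1})$ reproduce $a_{-1},a_{-2},\ldots$ for $i<0$. By the construction of Subsection \ref{sect}, such lifts are exactly the backward walks on $\mathcal{G}$ that emanate from the vertex (respectively, one of the finitely many vertices) carrying the starting label read off from the non-negative part of $x$, and whose successive sofic labels spell $a_{-1},a_{-2},\ldots$; the admissible values of $a'$ are then precisely the first labels on the initial edges of these walks. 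Thus the theorem becomes the statement that the collection of these first labels can be read from the structure of the paths carrying the sofic label $\mathcal{L}=\ldots a_{-2}a_{-1}$.

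I would then verify the two cases against Definition \ref{2.4}. For part (i) the starting vertex $v'$ is unique. If a finite prefix $L$ of $\mathcal{L}$ occurs with delay $d$, the defining uniqueness of the path forces its initial edge, so $a'$ and hence the preimage is unique. If $L$ is the label of left-closing paths $\pi_1,\ldots,\pi_m$, then since these are the only paths from $v'$ with label $L$, every backward walk with label $\mathcal{L}$ begins with some $\pi_j$; because they reconverge to a common terminal vertex and $x\in\Sigma_{\mathcal{A},\mathcal{A}'}$ guarantees a continuation of the label beyond that vertex, every $\pi_j$ extends to a full walk, and the preimages are exactly the points given by the initial labels of $\pi_1,\ldots,\pi_m$. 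When neither situation occurs for any finite prefix, no finite portion of the negative tail resolves $a'$, and I would show that the full label $\mathcal{L}$ is needed: every backward walk carrying all of $\mathcal{L}$ yields a preimage and every preimage arises so, which is the ``otherwise'' clause. Part (ii) follows the same scheme; the one new feature is that the non-negative data may be realized by several vertices of the vertex-like graph of Lemma \ref{lem}, and $d_L$- and $L$-distinguishability handle the subcases where (with or without a delay) the label nonetheless singles out one starting vertex, while sofic-left-closing handles the subcase where several admissible starting vertices feed into paths that reconverge to a common endpoint.

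The step I expect to be the main obstacle is establishing completeness and non-redundancy in the ``otherwise'' branches and throughout part (ii): one must show that when the finite conditions fail, each backward walk carrying the full infinite label really does extend to a genuine lift of $x$, so that it produces an actual preimage rather than a dead end, and that distinct preimages correspond exactly to distinct first labels, no value of $a'$ being either missed or double counted. The delicate point is the interaction of the two independent sources of ambiguity, namely the sofic coding (several vertices sharing one label) and the non-injectivity of $\varphi_n$ (several first labels over one sofic label); separating these is precisely what the four notions of Definition \ref{2.4} are designed to achieve, and the proof must check that they do so exhaustively.
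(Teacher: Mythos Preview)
Your proposal is correct and follows essentially the same approach as the paper's own proof: both reduce the problem to backward walks on $\mathcal{G}$ starting from the vertex (or vertices) determined by the non-negative part of $x$, and then run through the cases of Definition~\ref{2.4}. The only notable difference is cosmetic: the paper opens with auxiliary sets $K_n=\{w'\in\beta_n^{\mathcal{A}'}:\exists\,v'\neq w',\ \varphi_n(w')=\varphi_n(v')\}$ and $R_n(w')$ to first dispose of the situations in which $\sigma$ is globally invertible before entering the case analysis, whereas you go straight to the bijection between preimages and admissible values of the new symbol $a'$; your framing is arguably cleaner, and the paper's preliminary step is illustrative rather than logically necessary.
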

\begin{remark}
	The proof of Theorem \ref{main2} can be used to determine the pre-images of points in a one-sided sofic shift space with some adjustments.
\end{remark}
\begin{proof}
	Let 
	$$K_n=\{w'\in \beta_n^{\mathcal{A}'}\, : \exists \, v'\in \beta_n^{\mathcal{A}'}\, :\, \varphi_n(w')=\varphi_n(v')\},
	$$ where $n$ is due to the transition map.
	For $w'=w_1'\ldots w_n'\in K_n$, let
	$$R_n(w')=\{v'=v_1'\ldots v_n'\in K_n \setminus \{w'\}: \ w_2'\ldots w_n'=v_2'\ldots v_n'\}$$
	and $R(n)=\cup_{w\in K_n}R_n(w)$.
	If $\varphi_n$ is one-to-one, then $K_n$ and $R(n)$ are empty sets and $\sigma$ is one-to-one on $\Sigma_{\mathcal{A},\mathcal{A}'}$.
	Suppose $\varphi_n$ is not one-to-one. So, $K_n\neq \emptyset$. Suppose $R(n)=\emptyset$. In this case $\sigma$ is again invertible. 
	To see this let $x$ be as in \eqref{x} and for a letter $a'\in \mathcal{A}'$, $w'=a'a_0'a_1'\ldots a_{n-2}'\in K_n$. 
	Since $R(n)=\emptyset$, only for the word $w'$, we have $\varphi_n(w')=a_{-1}$ and so, $\sigma^{-1}(x)$ is as in \eqref{-1}.
	If $R(n)\neq \emptyset$, then   $\sigma$ might be invertible or not as we discuss in sequel.
	
	First  let $\Sigma_{\mathcal{A}'}$ be an $M$-step Markov space  and $\mathcal{G}$ be its backward labeled graph. Suppose $n=1$. Without loss of generality, suppose for the vertex $v'=a_0'a_1'\ldots a_{M-1}'\in \mathcal{V}(\mathcal{G})=\beta_M^{\mathcal{A}'}$, 
	there are at least two edges $e_1$ and $e_2$ emanating  from $v'$ with $\mathcal{L}(e_1)=\mathcal{L}(e_2)=a_{-1}$. Then one of the following 3 cases  happen.
	\begin{enumerate}
		\item There exists $d\geq 1$ and a path $\pi=e_{d}\ldots e_1$ with sofic label $\mathcal{L}(\pi)=a_{-d}\ldots a_{-1}$, such that $i(\pi)=v'$ and $\pi$ has delay $d$.
		\item There are $m\geq 2$ left-closing paths $\pi_j=e^j_{\ell} e^j_{\ell-1}\ldots e_2^j e_1^j$, $1\leq j\leq m$ of length $\ell$ with $i(\pi_j)=v'$,  $t(\pi_j)=w'$ and $\mathcal{L}(\pi_j)=a_{-d}\ldots a_{-1}$.
		\item  There are infinite length paths $\xi_j=\ldots e_3^j e_2^j e^j_1$, $1\leq j\leq t$  carrying the label $\mathcal{L}(\ldots e_3^j e_2^j e_1^j)=\ldots a_{-3} a_{-2}a_{-1}$, 
		where neither of them happen  with delay $d$ nor is a left-closing path with some length $\ell$. 
	\end{enumerate}
	Any vertex $v'$ admits only one of these cases.
	\begin{itemize}
		\item If Case (1) happens for the vertex $v'$ and a path $\pi=e_{-d}\ldots e_{-1}$  which starts from $v'$ with delay $d$, then $\sigma$ is invertible on $x$ and for $1\leq k\leq d$,
		\begin{equation}\label{pre 1}
			\sigma^{-k}(x)=(\ldots,a_{-k-1};\, a_{-k}',\, a_{-k+1}',\ldots ),
		\end{equation} 
		where $a_{-k-1}'$ is the first part of the label of edge $e_k$. 
		
		\begin{figure}
		\includegraphics[width=0.85\textwidth]{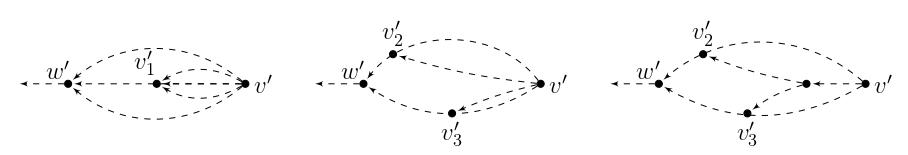}
			\caption{\label{l1}\small  By definition, the paths ending at $w'$, form left-closing paths and the smaller groups of paths ending at $v_1'$, $v_2'$ or $v_3'$ are not left-closing.}
			\label{Floop}
		\end{figure}
		
		\item If $v'$  satisfies   the second case, then $x$ has $m$ pre-images along the paths $\pi_1$ to $\pi_{m}$, i.e.,
		the pre-images of $x$ are the set of points 
		\begin{equation}\label{fpre}
			(\ldots, a_{-2}; (a^j)_{-1}',\, a_0',\ldots ),
		\end{equation}
		where $(a^j)_{-1}'$ is the first part of the label of edge $e^j_1$. Also,   for $1\leq k\leq \ell$ and $1\leq j\leq m$, $\sigma^{-k}(x)$ is the set of points 
		\begin{equation}\label{pre 2}
			(\ldots, a_{-k-1};(a^j)_{-k}',\, (a^j)_{-k+1}',\ldots, (a^j)_{-1}',\,a_0,a_1,\ldots),
		\end{equation}
		where $(a^j)_{-k}'$ is the first part of the label of edge $e^j_k$. In Figure \ref{Floop}, there are some kinds of left-closing paths.
		\item If  the third case happens, then all paths $\pi_j= \ldots e_3^j e_2^j e_1^j$ with $i(\pi_j)=v'$ and $\mathcal{L}(\ldots e_3^j e_2^j e_1^j)=\ldots a_{-3} a_{-2}a_{-1}$ determine the pre-images of $x$. Some cases are shown in Figure \ref{Fclosing}.
		
		\begin{figure}
%
\includegraphics[width=0.45\textwidth]{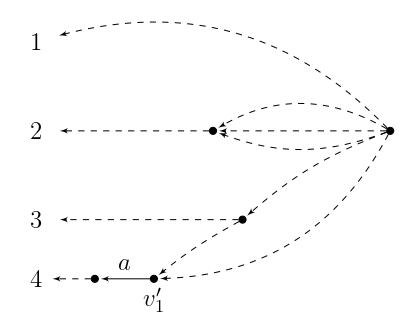}		
\caption{ We take all the paths associated to the numbers (1), (2) and (3) to have sofic label $\ldots a_{-3}\, a_{-2}\, a_{-1}$. For number (4), the sofic label of paths are
				$a_{-k}\,\ldots\,  a_{-3}\, a_{-2},\, a
				_{-1}$ up to vertex $v_1'$, but the next sofic label is ``a" instead of being $a_{-k-1}$. This means that none of the paths in number (4) produces the pre-images of $x$. 
				In this figure there are not any left-closing paths.}\label{Fclosing}
		\end{figure}
		
	\end{itemize}
	For $n>1$, the argument is similar and one  needs to consider the backward labeled graph for case $n>1$.
	Note that the length of the vertices in $\mathcal{G}$ should be greater than $n-1$ and if so, their lengths effect neither on the sets $K_n$ and $R_n$ nor on the Definition \ref{2.4}.
	
	Now suppose $\Sigma_{\mathcal{A}'}$ is a strictly sofic space and $\mathcal{G}$ is its backward labeled graph as described in Subsection
	\ref{sect}. Let $\mathcal{V}(\mathcal{G}')=\beta_n^{\mathcal{A}'}$. Choose the vertex with label $v'=a_0'\ldots a_{n-1}'$.
	If there is only one vertex with this label, then the argument is similar to the Markov case.
	
	Suppose there are vertices $v_1'$ to $v_s'$ with label $a_0'\ldots a_{n-1}'$ such that starting from these vertices and walking
	on $\mathcal{G}$, the first label of the edges is  $a_0'\, a_1'\, a_2'\, \ldots $. 
	\begin{itemize}
		\item If there exists $d$ such that the sofic label  $L=a_{-d}\ldots a_{-1}$ is $d_{L}$-distinguishable or if the
		label $L=\ldots a_{-2}a_{-1}$ is $L$-distinguishable, then there exists a unique path $\pi$ with
		label $L$ and a unique $t$, $1\leq t\leq s$, such that  $i(\pi)=v_{\ell}'$ and the pre-images
		of $x$ satisfy \eqref{pre 1} (In $L$-distinguishable case, $d\rightarrow \infty$).
		Let for $1\leq j\leq k$, $t(\pi)=v_j'$.  Then take $v_j'$ as $v'$ and the argument is as the Markov case.
		\item Suppose for $m\geq 2$ and $1\leq j\leq m$, the paths $\pi_j=e^j_{\ell} e^j_{\ell-1}\ldots e_2^j e_1^j$ are sofic-left-closing, 
		$t(\pi_j)=w'$ and $i(\pi_j)\in \{ v_1',\ldots,v_s'\}$. Then all such paths produce pre-images of $x$ and they satisfy \eqref{fpre} and \eqref{pre 2}. Again take $w'$ as $v'$ and the argument is as the Markov case.
		\item  If  there are paths $\pi_j= \ldots e_3^j e_2^j e_1^j$ with $i(\pi_j)\in \{ v_1',\ldots,v_s'\}$   and 
		$\mathcal{L}(\ldots e_3^j e_2^j e_1^j)$ $ = \ldots a_{-3} a_{-2}a_{-1}$, then all of them  determine the pre-images of $x$.
	\end{itemize}
\end{proof}	

\section{Construction of an  $N$-to-1 endomorphism horseshoe}\label{ho}
Let $\textbf{S}$ be the $[0,\,1]\times [0,\,1]$ square. We aim to construct an $N$-to-1 horseshoe 
map $f$ on a subset of $\textbf{S}$ according to the  following description.
Let $f:\mathbb{R}^2\rightarrow \mathbb{R}^2$ linearly expand $\textbf{S}$ in the horizontal direction by a  factor $\delta$ greater than $2N$, $N\in \mathbb{N}$  and
linearly contract it  in the vertical direction by a factor  $\delta'$ less than $\frac{1}{N}$ such that $\delta\delta'=N$..
If $\delta=2N+\epsilon$,  then  taking modulo by $2+\frac{\epsilon}{N}$ or folding the rectangle successively, $N$ copies of the rectangles 
$[0,\,\frac{\delta}{N}]\times [0,\,\delta']$ overlay.
Now bend these rectangles  back inside $\textbf{S}$.
Obviously, $f$ is $N$-to-1 on $\textbf{S}$. 
Note that  for any $k\geq 2$, $f^k(\textbf{S})$ is  \textbf{one} horizontal ``snake-like" strip lying inside $ f^{k-1}(\textbf{S})$. 
Since $f$ shrinks $\textbf{S}$ in vertical direction by $\delta'$, $\lim_{k\rightarrow +\infty}f^k(\textbf{S})$ tends to an infinite-length horizontal curve $c^u$.

Inside the square, $f(\textbf{S})\cap \textbf{S}$ is consisted of two horizontal rectangles named $H_a$ and $H_b$. 
The map $f$ has the Markov property, i.e., the iterates of $f$ on $\textbf{S}$ generate thin rectangles 
such that $f^k(\textbf{S})\cap \textbf{S}\subset f^{k-1}(\textbf{S})\cap \textbf{S}$, $k\geq 1$. 
\textbf{S}ee Figure \ref{1} for $N=2$ and $k=1,2$.
\begin{figure}[t]
	\includegraphics[width=0.6\textwidth]{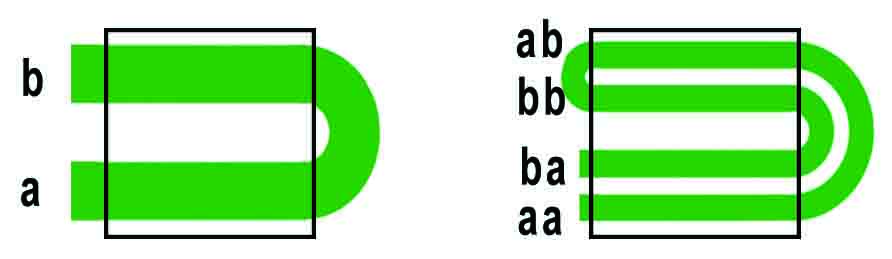}
	\caption{The rectangles $H_a$ and $H_b$ are fromed from  the first iterate of $f$. The rectangles $H_{aa}$, $H_{ab}$, $H_{ba}$ and $H_{bb}$ 
		are shown with labels $aa$, $ab$, $ba$ and $bb$.}
	\label{1}  
\end{figure}
In each iterate, $f$ shrinks the vertical side by factor $\delta'$.
Therefore, a  sequence of nested horizontal rectangles $f^k(\textbf{S})$ tends to a horizontal line segment as $k\rightarrow \infty$.
This  implies that   
$$\Lambda_H=\{(x,\,y)\in \textbf{S}:\ (x,\,y)\in \lim_{k\rightarrow \infty}f^k(\textbf{S})\cap \textbf{S}\}=[0,\,1]\times C_2,$$
where $C_2$ is isomorphic to the Cantor middle third  set. Any point in $\Lambda_H$ remains in $\Lambda_H$ under the iterations of $f$.

The pre-image of $\textbf{S}$ under $f$ is consisted of $N$ thin vertical rectangles.
The place of such rectangles depends on map $f$. In  Figure \ref{2}, since $f$ takes modulo by $2+\frac{\epsilon}{N}$,  the vertical rectangles 
in backward iterate are placed at equal distances. 
Each of these rectangles are the pre-images of $\textbf{S}$ which are linearly stretched by factor $1/\delta'$ and linearly shrank by
factor $1/\delta$ in the vertical and horizontal directions respectively. 
Bending the rectangles back inside the square, $f^{-1}(\textbf{S})$ is consisted of $N$ bended strips. Then $f^{-2}(\textbf{S})$ produces $N$ vertical snake-like strips inside each of
existing strips. This means that $f^{-k}(\textbf{S})$ tends to infinitely many vertical snake-like curves $c^s$ as $k\rightarrow \infty$ all of them lying inside $f^{-1}(\textbf{S})$.

Also,  $f^{-1}(\textbf{S})\cap \textbf{S}$ produces $2N$  
vertical rectangles $V_{0^1}, V_{1^1}, V_{0^2}, V_{1^2},\ldots V_{0^N}, V_{1^N}$.  See Figure \ref{2} for $N=2$. 
\begin{figure}[t]
	\includegraphics[width=0.55\textwidth]{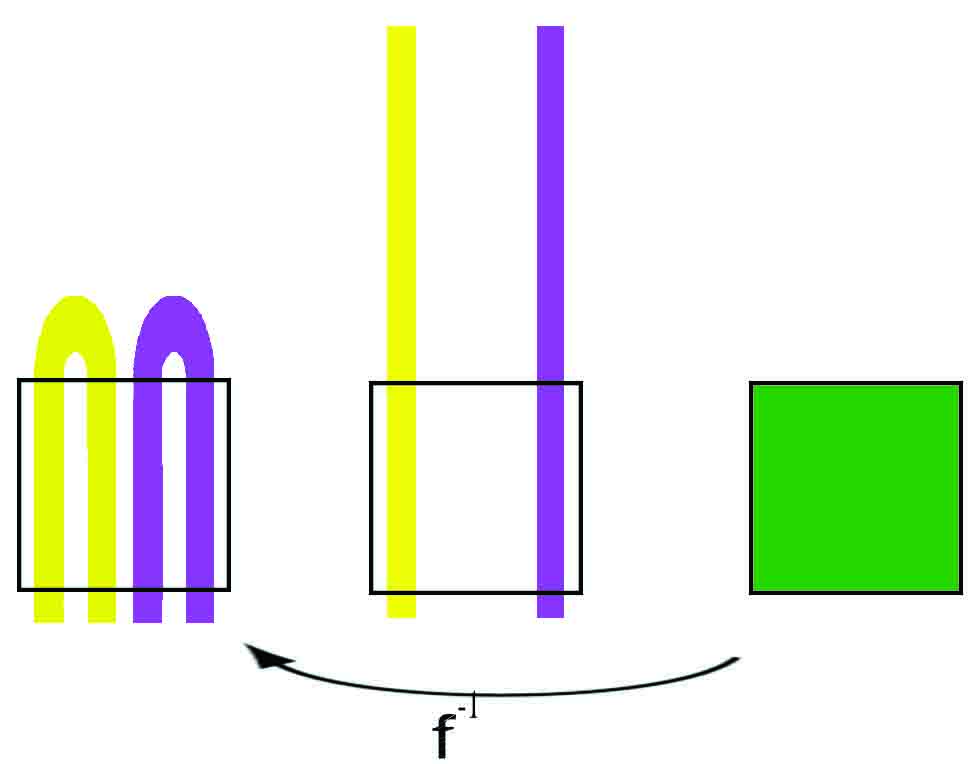}
	\caption{The first backward iterate of the horseshoe map $f$ for $N=2$.}
	\label{2}  
\end{figure}
For any $i$, $1\leq i\leq N$, 
\begin{equation*}
	f(V_{0^i})=H_a,\quad f(V_{1^i})=H_b. 
\end{equation*}
\begin{figure}[t]
	\includegraphics[width=0.65\textwidth]{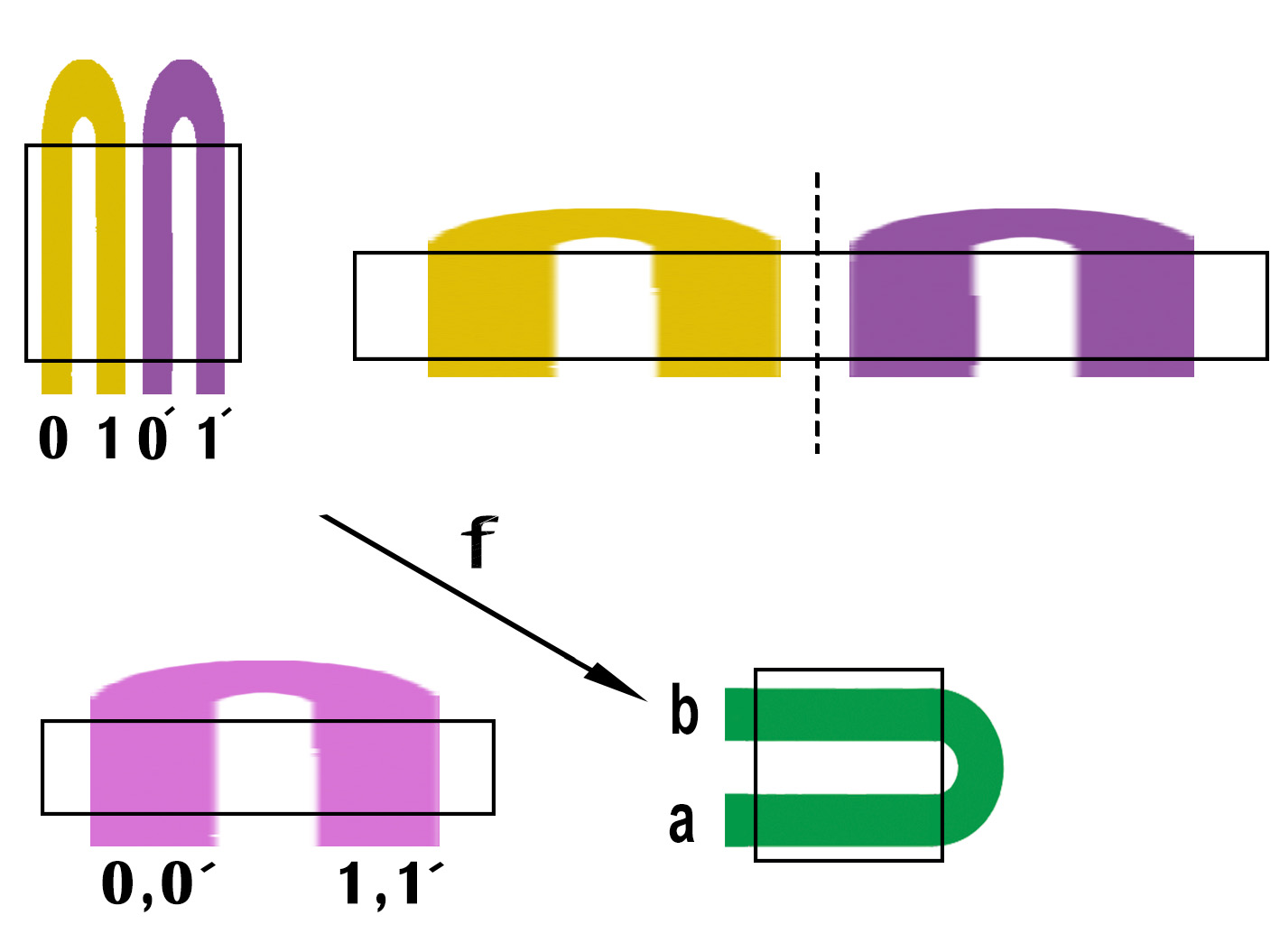}
	\caption{Let  $\mathcal{A}'=\{0,\, 1,\, 0',\, 1'\}$, $\mathcal{A}=\{a,\, b\}$, $\varphi_1(0)=\varphi_1(0')=a$ and $\varphi_1(1)=\varphi_1(1')=b$. First the upper left unit square is mapped to the upper right rectangle with horizontal and vertical sides $\delta$ 
		and $2/\delta$ where $\delta=4+\epsilon$. Cutting the
		rectangle from the dashed line and taking modulo by $2+\frac{\epsilon}{2}$, the rectangles with labels 0 and $0'$ overlay as the ones with labels 1 and $1'$. This gives the lower 
		left rectangle. Bending the obtained rectangle
		back to itself the vertical rectangles are mapped to horizontal rectangle. }
	\label{3}  
\end{figure} 
See Figure \ref{3}. For any $k\geq 2$, $f^{-k}(\textbf{S})\cap \textbf{S} \subset f^{-k+1}(\textbf{S})\cap \textbf{S}$ and
$\lim_{k\rightarrow \infty}f^{-k}(\textbf{S})$ tends to  a vertical line segment in $\textbf{S}$. More precisely, 
$$\Lambda_V=\{(x,\,y)\in \textbf{S}:\ (x,\,y)\in \lim_{k\rightarrow \infty }f^{-k}(\textbf{S})\cap \textbf{S} \}=C_{1}\times [0,\,1],$$
where $C_{1}$ is  isomorphic to a Cantor   set. 
Now let
\begin{equation}\label{Lambda}
	\Lambda=\Lambda_H\cap \Lambda_V=C_{1}\times C_2,
\end{equation}
and define the restriction of of $f$ to $\Lambda$, the horseshoe map. The set $\Lambda$ 
is the trapping region for the horseshoe map $f:\Lambda \rightarrow \Lambda$ and remains invariant under iterations of $f$ and $f^{-1}$.

\subsection{A conjugate zip shift map to the  $N$-to-1   horseshoe}\label{Conj}
To represent this system in symbolic dynamics, we  correspond a bi-infinite sequence of determined symbols to any point in $\Lambda$.
For this purpose, the following observation would be useful.  Let $Z=\{0^1,\,0^2,\,\ldots,0^N\}$,  $O=\{1^1,\,1^2,\,\ldots,1^N\}$ 
and $\mathcal{A}'=Z\cup O$. 
Fix a vertical rectangle $V_{n_0}$, $n_0\in \mathcal{A}'$. 
Let $V_{n_0,n_1}=\{(x,\,y)\in V_{n_0} \,: \, f(x,\,y)\in V_{n_1},\, n_1\in \mathcal{A}'\}=V_{n_0}\cap f^{-1}(V_{n_1})$. Inductively, define
\begin{equation}\label{V}
	\begin{tabular}{rcl}
		$V_{n_0,n_1,\ldots , n_k}$&=&$\{ (x,\,y)\in V_{n_0,\ldots,n_{k-1}}\,:\,\, f^{n_k}(x,\,y)\in V_{n_{k}},\, n_i\in \mathcal{A}'\}$\\
		&=& $V_{n_0,\ldots, n_{k-1}}\cap f^{-n_k}(V_{n_{k}})$.
	\end{tabular} 
\end{equation}
The vertical region $V_{n_0,n_1,\ldots , n_k}$ is the intersection of nested rectangles and tend to the vertical line 
segment $v_{x_0}=\lim_{k\rightarrow \infty}V_{n_0,n_1,\ldots , n_k}$ passing through the point $x_{0}$ on the $x$-axis. Obviously, $v_{x_0}\subset c^s$.

Now consider a horizontal rectangle $H_{n_{-1}}$, $n_{-1}\in \mathcal{A}=\{ a,b \}$. 
Set $H_{n_{-2},n_{-1}}=\{(x,\,y)\in H_{n_{-1}}\,:\, f^{-1}(x,\,y)\in H_{n_{-2}}\}=H_{n_{-1}}\cap f(H_{n_{-2}})$.  Inductively define
\begin{equation}\label{H}
	\begin{tabular}{rcl}
		$H_{n_{-k},\ldots ,n_{-1}}$&=&$\{(x,\,y)\in H_{n_{-(k-1)},\ldots, n_{-1}},\,  f^{-n_k}(x,\,y)\in H_{n_{-k}}\}$\\
		&=& $H_{n_{-(k-1)},\ldots ,n_{-1}}\cap f^{n_k}(H_{n_{-k}})$.
	\end{tabular} 
\end{equation}
Obviously, $\lim_{k\rightarrow \infty} H_{n_{-k},\ldots ,n_{-1}}$ tends to the horizontal line segment $h_{y_0}$ passing through the 
point $y_0$ on the $y$-axis. Clearly, $h_{y_0}\subset c^u$.

Therefore,  
\begin{equation}\label{lim}
	(\lim_{k\rightarrow \infty}V_{n_0,n_1,\ldots , n_k})\cap (\lim_{k\rightarrow \infty} H_{n_{-k},\ldots ,n_{-1}})
\end{equation}
uniquely determines the point $(x_0,\,y_0)=v_{x_{0}}\cap h_{y_0}$, 
where $n_i\in \mathcal{A}'$ for $i\geq 0$ and $n_i\in\mathcal{A}$ for $i\leq -1$.  

The set 
\begin{equation}\label{UP}
	\Sigma_{\mathcal{A},\mathcal{A}'}=\{(n_k)_{k\in \mathbb{Z}}\,:\, (\lim_{k\rightarrow \infty}V_{n_0,n_1,\ldots , n_k})\cap (\lim_{k\rightarrow \infty} H_{n_{-k},\ldots ,n_{-1}})\neq \emptyset\}
\end{equation}
is a zip shift space over alphabets $\mathcal{A}$ and $\mathcal{A}'$. 
We correspond the bi-infinite sequence  $(n_k)_{k\in \mathbb{Z}}$ to a  point $(x_0,\,y_0)\in\Lambda$ with respect to 
the indices obtained in equation \eqref{lim}.

Let $\varsigma :\Lambda\rightarrow \Sigma_{\mathcal{A},\mathcal{A}'}$ be the \textit{code} map which maps each point in $\Lambda$ to its unique
sequence of symbols in $\mathcal{A}\cup \mathcal{A}'$. 
For $z\in Z$ and $o\in O$, define $\varphi_1(z)=a$ and $\varphi_1(o)=b$. Since $\varphi_1$ is not one-to-one, $\varphi_1^{-1}(a)\in Z$ 
and $\varphi_1^{-1}{b}\in O$. 
According to  equations 
\eqref{V} and \eqref{H},  $\varsigma((x_0,\,y_0))=(\ldots,n_{-2},n_{-1}; n_0,n_1,\ldots)$ means that  for $i\in \mathbb{N}$,
\begin{equation}\label{VH}
	f^{-i}(x_0,\,y_0)\cap H_{n_{-i}}\neq \emptyset,\ (x_0,\,y_0)\in V_{n_0}, \ f^{i}(x_0,\,y_0)\in V_{n_{i}}
\end{equation}
where $n_i\in \mathcal{A}'$ for $i\geq 0$ and $n_i\in\mathcal{A}$ for $i\leq -1$.  
Equation \eqref{lim} indicates that for a point $(x_0,\,y_0)\in\Lambda$ all the points on $v_{x_0}\cap \Lambda$ have the same forward code.
But the points on $h_{y_0}\cap \Lambda$ have different backward codes due to the possibility of different choices of pre-images of $x_0$.

The zip shift map on $\Sigma_{\mathcal{A},\mathcal{A}'}$  represents the dynamics of $f$ on $\Lambda$ and also  illustrates  the $n$-to-1 nature of $f$. 
Fix the point $(x_0,\, y_0)\in \Lambda$.   Denote by $\{(x_{-1}^i,\,  y_{-1})\}$ the set of $n$ pre-images of $(x_0,\, y_0)$ under $f$. 
Each of the points $(x_{-1}^i,\,  y_{-1})$ lies on the intersection of $h_{y_{-1}}$ with $v_{x_{-1}^i}$. 
Any segment $v_{x_{-1}^i}$ is a subset of one of the vertical rectangles $\{ V_0^1,\, \ldots, \, V^n_0 \}$ or $\{ V_1^1,\, \ldots, \, V^n_1 \}$.
Since the point $(x_0,\,y_0)$ has $n$ pre-images, there are $n$ possible choices for the place $n_{-1}$ in the corresponding sequence.
Selecting a special branch $(x_{-1}^i,\, y_{-1})$ of pre-images of $(x_0,\, y_0)$, $n_{-1}$ will be $0_i$ or $1_i$ 
with respect to the index of the vertical rectangle $V_0^i$ or $V_1^i$ containing $(x_{-1}^i,\, f^{-1}(y_0))$. 

\begin{theorem}\label{th3.1}
	The factor code map $\varsigma : \Lambda\longrightarrow \Sigma_{\mathcal{A},\mathcal{A}'}$ is a  homeomorphism.
\end{theorem}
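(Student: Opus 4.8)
The plan is to verify that $\varsigma$ is a continuous bijection and then promote this to a homeomorphism by a compactness argument. Indeed, $\Lambda=C_1\times C_2$ is a closed bounded subset of $\mathbb{R}^2$, hence compact, while $\Sigma_{\mathcal{A},\mathcal{A}'}$ carries the product (cylinder) topology inherited from a product of finite discrete alphabets and is therefore Hausdorff. Once $\varsigma$ is known to be a continuous bijection, the classical fact that a continuous bijection from a compact space onto a Hausdorff space is a homeomorphism finishes the proof. The conceptual heart of the argument is that the zip structure, collapsing the symbols $0^1,\dots,0^N$ to $a$ and $1^1,\dots,1^N$ to $b$ through $\varphi_1$, is exactly what renders the backward coding single-valued despite $f$ being $N$-to-$1$.

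First I would establish that $\varsigma$ is well defined. The forward part poses no difficulty: since $f$ is single valued, the iterates $f^i(x_0,y_0)$, $i\ge 0$, are honest points, and by \eqref{VH} each lands in exactly one vertical rectangle $V_{n_i}$, so the forward word $(n_0,n_1,\dots)$, with entries in $\mathcal{A}'$, is unambiguous (the Cantor construction guarantees that $\Lambda$ meets the interiors of the rectangles only, so no boundary ambiguity arises). For the backward part, although $f^{-i}(x_0,y_0)$ is a set of $N^i$ points, the map $f$ contracts injectively in the vertical direction, so all these preimages share a common $y$-coordinate and therefore lie in a single horizontal rectangle, either $H_a$ or $H_b$; this common choice defines $n_{-i}\in\mathcal{A}$ unambiguously, in accordance with $f(V_{0^i})=H_a$, $f(V_{1^i})=H_b$ and $\varphi_1(0^i)=a$, $\varphi_1(1^i)=b$.

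Next I would prove bijectivity. For injectivity, suppose two points of $\Lambda$ have the same $\varsigma$-image. Equal forward words force both points into the same nested family $V_{n_0,\dots,n_k}$, whose intersection is the single vertical segment $v_{x_0}\subset c^s$ by \eqref{lim}; since the horizontal diameter of $V_{n_0,\dots,n_k}$ shrinks to $0$ as $k\to\infty$, this pins down a common $x_0\in C_1$. Symmetrically, equal backward words (here there are only the two horizontal strips $H_a,H_b$, so $C_2$ is coded by $\{a,b\}^{\mathbb{N}}$) place both points in the nested family $H_{n_{-k},\dots,n_{-1}}$ from \eqref{H}, forcing a common $y_0\in C_2$; hence the two points coincide. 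Surjectivity is almost immediate from the very definition \eqref{UP}: any admissible sequence $(n_k)_{k\in\mathbb{Z}}$ in $\Sigma_{\mathcal{A},\mathcal{A}'}$ satisfies $(\lim_k V_{n_0,\dots,n_k})\cap(\lim_k H_{n_{-k},\dots,n_{-1}})\neq\emptyset$, and this intersection is exactly one point $(x_0,y_0)=v_{x_0}\cap h_{y_0}\in\Lambda$ whose code is, by construction, the given sequence.

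Finally, continuity follows by checking that preimages of cylinder sets are open. For a central block $w=(n_{-N},\dots,n_N)$ one has $\varsigma^{-1}([w])=\big(V_{n_0,\dots,n_N}\cap H_{n_{-N},\dots,n_{-1}}\big)\cap\Lambda$, an intersection of finitely many closed rectangles with $\Lambda$; within the Cantor set $\Lambda$ such a set is closed, and its complement is a finite union of sets of the same form, so it is clopen. Since the cylinders generate the topology of $\Sigma_{\mathcal{A},\mathcal{A}'}$, the map $\varsigma$ is continuous. I expect the main obstacle to be the careful bookkeeping for the backward direction: one must argue, using the injectivity of $f$ along the contracting fibers together with the identities $f(V_{0^i})=H_a$, $f(V_{1^i})=H_b$, that the $N$ distinct preimage branches never disagree on the $\mathcal{A}$-symbol, so that $\varsigma$ really is a function and not merely a relation; everything else reduces to the standard metric estimate that the rectangle diameters decay geometrically.
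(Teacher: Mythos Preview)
Your proof is correct and follows essentially the same approach as the paper: bijectivity via the nested rectangle intersections \eqref{lim}--\eqref{UP}, then continuity by a routine argument. The only differences are cosmetic---you add an explicit well-definedness check for the backward code and invoke the compact-to-Hausdorff lemma for the continuity of $\varsigma^{-1}$, whereas the paper simply argues sequential continuity of $\varsigma$ and $\varsigma^{-1}$ directly---but the substance is identical.
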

\begin{proof}
	By using statements \ref{lim} and \ref{UP} any sequence  $(\ldots,\, ,\,s_{-1};\,s_0,\,s_1,\,s_2,\,\ldots)\in \Sigma_{\mathcal{A},\mathcal{A}'}$
	represents the unique point on the intersection of 
	$v_{x_0}\cap h_{y_0}$ and any such point has a unique coding via $\varsigma$ which shows that $\varsigma$ is one-to-one and onto.
	
	To see the continuity of $\varsigma$, let $\{x_n\}_{i=1}^{+\infty}$ be any sequence of points in $\Lambda$ convergent to some $x\in \Lambda$. By definition of $\varsigma,$ it is obvious that  $\{\varsigma(x_n)\}_{i=1}^{+\infty}$ is convergent to $\varsigma(x).$
	A similar argument shows that $\varsigma^{-1}$ is continuous.

\end{proof}
\begin{theorem}\label{4.2}
	The maps $f:\Lambda\longrightarrow \Lambda$ and $\sigma:\Sigma_{\mathcal{A},\mathcal{A}'}\longrightarrow \Sigma_{\mathcal{A},\mathcal{A}'}$ are topologically
	conjugate via the code map $\varsigma$.
\end{theorem}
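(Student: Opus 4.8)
The plan is to exploit Theorem~\ref{th3.1}, which already provides that $\varsigma$ is a homeomorphism, and thereby reduce the statement to the single intertwining identity $\varsigma\circ f=\sigma\circ\varsigma$ on $\Lambda$. Indeed, once this commutation is established we obtain $f=\varsigma^{-1}\circ\sigma\circ\varsigma$, which is exactly what topological conjugacy via $\varsigma$ means. So the whole argument reduces to a coordinate-by-coordinate comparison of the two bi-infinite sequences $\varsigma(f(p))$ and $\sigma(\varsigma(p))$ for an arbitrary point $p=(x_0,y_0)\in\Lambda$.

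First I fix $p$ with code $\varsigma(p)=(\ldots,n_{-2},n_{-1};\,n_0,n_1,\ldots)$, where $n_i\in\mathcal{A}'$ for $i\geq 0$ and $n_i\in\mathcal{A}$ for $i\leq -1$, as prescribed by \eqref{V}, \eqref{H} and \eqref{VH}. For the nonnegative coordinates I would use the nested definition \eqref{V}: since $f^{i}(f(p))=f^{i+1}(p)\in V_{n_{i+1}}$, the $i$-th entry of $\varsigma(f(p))$ is $n_{i+1}$ for every $i\geq 0$, which is precisely the nonnegative part of $\sigma(\varsigma(p))$ as read off from the definition of the zip shift map. For the coordinates of index $\leq -2$ the same leftward shift occurs: by the backward coding convention \eqref{H}, the entry at position $i$ of $\varsigma(f(p))$ is the label of the horizontal rectangle containing $f^{i+1}(f(p))=f^{i+2}(p)$, which is exactly the rectangle recorded at position $i+1$ of $\varsigma(p)$; hence these entries also equal $n_{i+1}$, matching $\sigma$.

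The crucial coordinate, and the one place where genuinely new information is produced, is the entry of index $-1$ of $\varsigma(f(p))$. By the convention of \eqref{H} this entry is the label of the horizontal rectangle containing $f(p)$ itself. Here I would invoke the geometric Markov relations $f(V_{0^i})=H_a$ and $f(V_{1^i})=H_b$ together with the definitions $\varphi_1(0^i)=a$ and $\varphi_1(1^i)=b$: since $p\in V_{n_0}$, its image satisfies $f(p)\in f(V_{n_0})=H_{\varphi_1(n_0)}$, so the $(-1)$-entry of $\varsigma(f(p))$ equals $\varphi_1(n_0)$. On the other hand, the definition of the zip shift map (with $n=1$) sets the $(-1)$-entry of $\sigma(\varsigma(p))$ to be exactly $\varphi_1(n_0)$. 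Thus the two sequences agree in this coordinate as well, and combining the three cases yields $\varsigma(f(p))=\sigma(\varsigma(p))$ for every $p\in\Lambda$.

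I expect the main obstacle to be precisely this $(-1)$-coordinate step: one must argue carefully that the horizontal-rectangle address of $f(p)$ is governed solely by the single vertical symbol $n_0$ of $p$ and not by the finer branch data carried by $\mathcal{A}'$, so that the collapse happening at index $-1$ is exactly the non-injective transition map $\varphi_1$. This is the point at which the $N$-to-$1$ folding of $f$ is absorbed into the non-injectivity of $\varphi_1$, and it is what distinguishes the zip shift from an ordinary two-sided shift. A secondary matter requiring care is the bookkeeping that shifting the future itinerary and appending the collapsed symbol to the past itinerary returns an admissible element of $\Sigma_{\mathcal{A},\mathcal{A}'}$; this compatibility of the nested descriptions \eqref{V} and \eqref{H} under one application of $f$ follows from \eqref{lim}--\eqref{UP} together with the bijectivity already furnished by Theorem~\ref{th3.1}.
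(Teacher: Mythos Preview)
Your proposal is correct and follows essentially the same route as the paper: invoke Theorem~\ref{th3.1} to reduce the conjugacy to the intertwining relation $\varsigma\circ f=\sigma\circ\varsigma$, and then verify this coordinatewise, with the decisive step at index $-1$ being the geometric Markov relation $f(V_{n_0})=H_{\varphi_1(n_0)}$. The paper packages the same computation slightly differently, writing $f(V_{s_0,s_1,\ldots})=f(V_{s_0})\cap V_{s_1,s_2,\ldots}=H_{\varphi_1(s_0)}\cap V_{s_1,s_2,\ldots}$, but the content is identical.
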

\begin{proof}
	According to the  diagram 
	
	$$
	\begin{array}{rrl} \Lambda & \stackrel{f}{\longrightarrow} &
		\Lambda\\
		\varsigma\downarrow && \downarrow\varsigma\\
		\Sigma_{\mathcal{A},\mathcal{A}'}&
		\stackrel{\sigma}{\longrightarrow} &
		\Sigma_{\mathcal{A},\mathcal{A}'},\end{array}
	$$
	we need to show that $f=\varsigma^{-1}\circ \sigma \circ \varsigma$. 
	Let $(x_0,\,y_0)\in \Lambda$  be a point with 
	$$\varsigma((x_0,\,y_0))=(\ldots,\, s_{-2},\,s_{-1};\,s_0,\,s_1,\,\ldots).$$ This means that
	$$\ldots, f^{-1}((x_0,\,y_0))\cap H_{s_{-1}}\neq \emptyset,\ \ (x_0,\,y_0)\in V_{s_0},\ \ f((x_0,\,y_0))\in V_{s_1},\ \ldots.$$
	Equivalently,
	$$\ldots, f^{-1}((x_0,\,y_0))\cap H_{\ldots,s_{-2},s_{-1}}\neq \emptyset,\ \ (x_0,\,y_0)\in V_{s_0,s_1,\ldots},\ \ f((x_0,\,y_0))\in V_{s_1,s_2,\ldots},\ \ldots.$$
	According to the definition of $V_{s_0,s_1,\ldots}$ in \eqref{V}, $V_{s_0,s_1,\ldots}=V_{s_0}\cap f^{-1}(V_{s_1,s_2,\ldots})$. 
	To obtain the code of the point $f((x_0,\,y_0))$, apply $f$ to all of the horizontal and vertical rectangles  containing $(x_0,\,y_0)$. See Figure \ref{4}. 
	\begin{figure}[t]
		\includegraphics[width=0.75\textwidth]{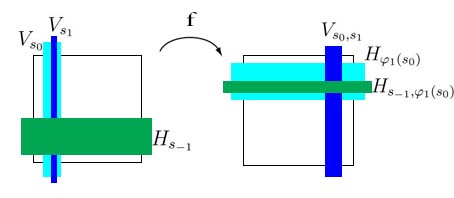}
		\caption{The way $f$ maps vertical strips to horizontal ones.}
		\label{4}  
	\end{figure}
	Then 
	\begin{equation*}
		(x_1,\,y_1)=f((x_0,\,y_0))\in f(V_{s_0,s_1,\ldots})=f(V_{s_0})\cap V_{s_1,s_2,\ldots}= H_{\varphi_1(s_0)}\cap V_{s_1,s_2,\ldots}.
	\end{equation*}
	Thus, for $k\geq 1$, $f^{n_k}((x_1,\,y_1))=f^{n_k+1}((x_0,\,y_0))\in V_{s_{k}+1,s_k+2,\ldots}$. 
	Therefore, $\varsigma(f(x_0,\,y_0))= (\ldots\, s_{-2},\,s_{-1},\,\varphi_1(s_0);\,s_1,\,s_2\,\ldots)$  
	and $f(x_0,\,y_0)=\varsigma^{-1}\circ \sigma \circ \varsigma(x_0,\,y_0)$ by Theorem \ref{th3.1}.
	\end{proof}
\begin{example}\label{ex:4.3}
	Let $f:\Lambda \rightarrow \Lambda$ be a 2 to 1 horseshoe map defined at the beginning of Section \ref{ho}, where $\Lambda=C_1\times C_2$ as in \eqref{Lambda} for $n=1$.
	Let $\mathcal{A}'=\{1,\, 2,\, 3,\, 4\}$. The usual shift map relevant to this system is the one-sided shift map $\sigma$ over $\Sigma_{\mathcal{A}'}$.

	Suppose $g(x)=4x$ modulo 1. Again its corresponding shift map is $\sigma$ over $\Sigma_{\mathcal{A}'}$. 
	For both systems, the maps $f$ and $g$ are semi-conjugate to $\sigma:\Sigma_{\mathcal{A}'}\rightarrow \Sigma_{\mathcal{A}'}$. 
	Therefore, the shift map cannot distinguish these systems. 
	But if we code them into a zip shift space, then they will have different code spaces as follows.
	
	Let   $\mathcal{A}=\{a,\, b\}$,  $\mathcal{B}=\{a\}$ and  $\mathcal{A}'=\{1,\, 2,\, 3,\, 4\}$ be three sets of alphabets. 
	Then $f$ is conjugate to $\sigma_f:\Sigma_{\mathcal{A},\mathcal{A}'}\rightarrow \Sigma_{\mathcal{A},\mathcal{A}'}$ with $\varphi_1(1)=\varphi_1(3)=a$ and 
	$\varphi_1(2)=\varphi_1(4)=b$. The map $\sigma_f$ illastrates the 2 to 1 feature of $f$. 
	The map $g$ is conjugate to $\sigma_g:\Sigma_{\mathcal{B},\mathcal{A}'}\rightarrow \Sigma_{\mathcal{B},\mathcal{A}'}$ with
	$\varphi_1(1)=\varphi_1(2)=\varphi_1(3)=\varphi_1(4)=a$ which shows that $g$ is a 4 to 1 map.
\end{example}

\section{Stable and unstable sets; periodic, pre-periodic,  homoclinic, heteroclinic points and their orbits}\label{long}
In this subsection  we take $n$ to be 1 in the definition of the transition  map $\varphi_n$ for simplicity. All of the definitions and theorems can be adjusted for $n>1$.
For  points $s=(\ldots, s_{-2},\, s_{-1};\,s_0,\,s_1,\ldots),\, t=(\ldots, t_{-2},\, t_{-1};\,t_0,\,t_1,\ldots) \in \Sigma_{\mathcal{A},\mathcal{A}'}$, 
let  $N(s,\,t)=\min \{ |i|, s_i\neq t_i, i\in\mathbb{Z}\}$,
$N^+(s,\,t)=\min \{ i, s_i\neq t_i, i\geq 0\}$  and $N^-(s,\,t)=\min \{ i+1, s_i\neq t_i, i< 0\}$.
Let $d(s,\,t)=\frac{1}{2^{N(s,\,t)}}$, $d^+(s,\,t)=\frac{1}{2^{N^+(s,\,t)}}$, $d^-(s,\,t)=\frac{1}{2^{N^-(s,\,t)}}$.
Clearly, $d$ is a metric and $d^+$, $d^-$  are pseudo-metrics on $\Sigma_{\mathcal{A},\mathcal{A}'}$. 
For a point $s\in (\Sigma_{\mathcal{A},\mathcal{A}'},\,d)$, $\mathcal{N}_{r}(s)=\{t\in \Sigma_{\mathcal{A},\mathcal{A}'} \,:\, d(t,\,s)<r\}$ is a neighborhood  of
$s$ with diameter $2r$.       
For $i\in \mathbb{Z}$ and $\ell\in \mathbb{N}\cup \{0\}$, the set 
$$s_{i}^{\ell}=[s_i,\, s_{i+1},\ldots, s_{i+\ell}]=\{ (k_n)_{n\in \mathbb{Z}}\in \Sigma_{\mathcal{A},\mathcal{A}'} :\,k_i=s_i,\ldots, k_{i+\ell}=s_{i+\ell}\}$$
is  a \textit{cylinder}. These sets are open and close and form a basis for the topology on 
$\Sigma_{\mathcal{A},\mathcal{A}'}$ induced by the metric $d$. It is easy to see that the space $\Sigma_{\mathcal{A},\mathcal{A}'}$ is 
a Cantor set according to  $d$.

In order to define the notions of stable and unstable sets we equip the 
zip shift space $\Sigma_{\mathcal{A},\mathcal{A}'}$ with an equivalent metric $d^{\pm}(s,\,t)=\frac{1}{2}(d^-(s,\,t)+d^+(s,\,t))$. 
If $\Omega_1$ and $\Omega_2$ are two subsets of $\Sigma_{\mathcal{A},\mathcal{A}'}$, then define 
$d^{\pm}(\Omega_1,\,\Omega_2)=\inf\{d^{\pm}(\omega_1,\,\omega_2):\, \omega_1\in \Omega_1,\,\omega_2\in \Omega_2\}$.
\begin{definition}\label{SU} 
	Let $p=(\ldots, p_{-2},\, p_{-1};\,p_0,\,p_1,\ldots) \in \Sigma_{\mathcal{A},\mathcal{A}'}$ be a periodic point. Then the \textit{stable} and \textit{unstable sets} 
	of $p$ are defined as 
	\begin{align}
		W^s_{\textrm{global}}(p) &=\{ t\in \Sigma_{\mathcal{A},\mathcal{A}'}: d^{\pm}((\sigma^n(p)),\, (\sigma^n(t)))=0,n\rightarrow +\infty \}\nonumber\\
		&= \{t\in \Sigma_{\mathcal{A},\mathcal{A}'}:\ \exists N'(t)\geq 0, \forall n\geq N'(t), t_n=p_n\},\\
		W^s_{\textrm{special}}(p) &=\{ t\in W^s_{\textrm{global}}(p):\ t_n=p_n, \forall n\geq 0 \},\nonumber
	\end{align} 
	and
	\begin{align}
		W^u_{\textrm{global}}(p))&=\{ r\in \Sigma_{\mathcal{A},\mathcal{A}'}: d^{\pm}((\sigma^{-n}(p)),\, (\sigma^{-n}(r)))=0,n\rightarrow +\infty \}\nonumber,\\
		&= \{r\in \Sigma_{\mathcal{A},\mathcal{A}'}:\ \exists N(r)> 0, \forall n\geq N(r), r_{-n}=p_{-n}\},\\
		W^u_{\textrm{special}}(p) &=\{ r\in W^u_{\textrm{global}}(p):\ r_{-n}=p_{-n}, \forall n> 0 \}.\nonumber
	\end{align} 
	\end{definition} 
\begin{remark}
	In the definition of $W^u_{\textrm{global}}(p))$, the metric $d^{\pm}$ is the distance between two sets of points since $\sigma$ is a local homeomorphism. 
	Therefore, for a point $r\in \Sigma_{\mathcal{A},\mathcal{A}'}$,
	the condition 
	$$\{r\in \Sigma_{\mathcal{A},\mathcal{A}'}:\ \exists N(r)> 0, \forall n\geq N(r), r_{-n}=p_{-n}\},$$ is a sufficient condition for $r$ being in $W^u_{\textrm{global}}(p))$.
\end{remark}
For any point $t$, define $D^u(t)=(t_0,\,t_1,\ldots)$ and $D^s(t)=(\ldots, t_{-2},\, t_{-1})$, the stable (forward) and unstable (backward) ``\textit{directions}'' of $t$ respectively.
Let $p=(\overline{p_0,\,p_1,\cdots, p_{m-1}})$ be a periodic point with period $m$ according to Definition \ref{defs}. 
Note that all points $p^i=\sigma^{i}(p)$ for $0\leq i\leq m-1$ are periodic points with period $m$.
They can produce infinitely many stable directions. Howbeit, all of them share the same unstable direction.
In fact, by Definition \ref{SU}, the unstable direction of all points 
$\sigma^{km}(p^i)$, $k\geq 0$, is the same and by equation (\ref{extend})  is
$D^u(p^i)=\{ (\varphi_1(\overline{p_i,\cdots,p_{m-i-1}})): 0\leq i\leq m-1\}$. Denote by $D^u(O(p))$ the union of the unstable directions of $p^i$
for $0\leq i\leq m-1$ i.e., $$D^u(O(p)):=\bigcup_{i=0}^{m-1} D^u(p^i).$$ 

If $\sigma$ fails to be one to one on $p$, then $p$ has pre-images as
	\begin{equation}\label{pr}
		(\varphi_1(\overline{p_{m-1},p_0,\cdots,p_{m-2}}); \varphi_1^{-1}(\varphi_1(p_{m-1})), \overline{p_0,\,p_1, \cdots, p_{m-1}}),
	\end{equation}
	where obviously, $p_{m-1}\in \varphi_1^{-1}(\varphi_1(p_{m-1}))$.

	\begin{definition}\label{pre-p}
		Let $p=(\overline{p_0,\ldots, p_{m-1}})$ be a periodic point of period $m$ and  
		let $O_n(p)=f^{-n}(O(p))$. Then we call $p'\in O_n(p^i)$ the pre-periodic point of $p^i=\sigma^{i}(p)$ if there exists some $k\geq 1$ such that $\sigma^{km}(p')=p^i$
		(See Figure \ref{PP}).
	\end{definition}

	\begin{figure}
\includegraphics[width=0.75\textwidth]{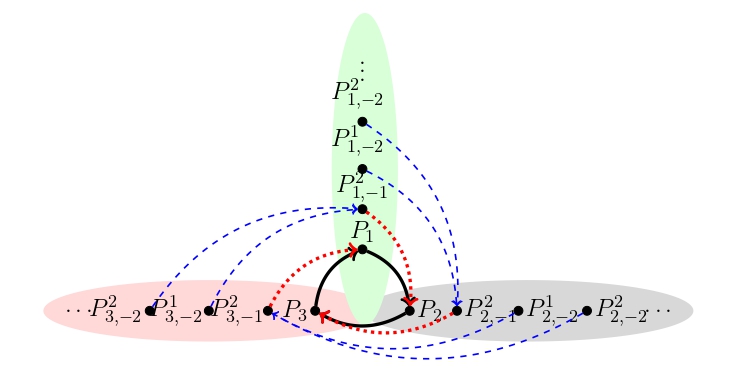}		
		\caption{The 1st and 2nd pre-periodic points for periodic points of period 3. }\label{PP}
	\end{figure}
	\vspace{0.25cm}
	\begin{definition}\label{hohe}
		Let $p=(\overline{p_0,\ldots, p_{m-1}})$ and  
		$q=(\overline{q_0,\ldots, q_{\ell-1}})$ 
		be  periodic points in $\Sigma_{\mathcal{A},\mathcal{A'}}$. Then $x\in \Sigma_{\mathcal{A},\mathcal{A'}}$ is a \textit{homoclinic}  point to $p$, 
		if $x\in W^s_{\textrm{special}}(p)\cap W^u_{\textrm{special}}(p)$ or equivalently, if 
		there exists integers $N(x)>0$ and $N'(x)\geq 0$ such that for $n\geq N(x)$, $x_{-n}=p_{-n}$ and for $n\geq N'(x)$, $x_{n'}=p_n'$. Also, 
		$y\in \Sigma_{\mathcal{A},\mathcal{A'}}$ is a \textit{heteroclinic}  point, if $y\in W^s_{\textrm{special}}(p)\cap W^u_{\textrm{special}}(q)$.
	\end{definition} 
The orbit of homoclinic points is not necessarily unique. 
Let $p$, $N$ and $N'$ be as in  Definition \ref{hohe}. For some $k, k'\geq 0$, let 
$$x=(\overline{\varphi_1(p_k),\ldots, \varphi_1(p_{m-1+k})}, x_{-N+1},\, \ldots, x_{-1};\, x_0,x_1,\ldots, x_{N'-1},\overline{p_{k'},\ldots,p_{m-1+k'}})$$

be a homoclinic point to $p$ (the indices  are taken  modulo by $m$). 
Denote the orbit of $x$ by points $h_i(p,x)$, $i\in \mathbb{Z}$. To determine the $h_i(p,x)$ consider the following 3 cases. 
\begin{itemize}
	\item If $i\geq 0$, then $h_i(p,x)=\sigma^{i}(x)$.\\
	\item For $i=-N-r$ with $r\geq0$, let $h_{i}(p,x)=(y_j)_{j\in \mathbb{Z}}$. 
	If $j\leq r$ or $j\geq N+r$, the entries of $(y_j)_{j\in\mathbb{Z}}$ can be easily determined according to the periodic point $p$. 
	It means that 
	$$y_j=\left\{\begin{tabular}{ll}
		$\varphi_1(p_{m-1+k-r+j})\hspace{7mm} $& if $j<0$\\
		$p_{m-1+k-r+j}\hspace{7mm} $& if $0\leq j\leq r$\\
		$x_{j-N-r}$& if $j\geq N+r$,
	\end{tabular}\right.$$ 
	and $y_{j}\in \varphi^{-1}_1(x_{j-N-r})$ if $r<j<N+r$. All the indices can be uniquely determined except when $r<j<N+r$. 
	For entries with these indices, we should choose ``appropriate" elements of $\varphi^{-1}_1(x_{j-N-r})$. 
	By appropriate element, we mean an element  $y_j$ such that $(y_j)_{j\in\mathbb{Z}}\in  \Sigma_{\mathcal{A},\mathcal{A}'}$ 
	and also, for any $i=-N-r$, $r\geq 0$ the corresponding $(y_j)_{j\in\mathbb{Z}}$ should belong to $\Sigma_{\mathcal{A},\mathcal{A}'}$.
	The number of these orbits is less than or equal to $\sum_{t=-N+1}^{-1}|\varphi^{-1}_1(x_t)|$.\\
	\item If $-N<i<0$, then $h_i(p,x)=\sigma^{N+i}(h_{-N}(p,x))$.	
\end{itemize}
Obviously, the homoclinic point $x$ might have more than one homoclinic orbit to $p$.
See Example \ref{eg3.5}.
The similar procedure works to find heteroclinic orbits.
\subsection{Stable and unstable sets and periodic , homoclinic and heteroclinic  orbits of the $N$-to-1 horseshoe}\label{PPH-ho}
In this subsection, some properties of $f$ such as  stable and unstable sets, periodic, pre-periodic, homoclinic,  heteroclinic   points and their  orbits are verified.
Here we take $n$ to be 1 ($n$ is due to $\varphi_n$) for simplicity.

\subsubsection{ Perodic  points and their orbits} Consider the periodic point $P=(x_0,\, y_0)$ and its
orbit $(x_i,\,y_i)\in \Lambda$, $0\leq i\leq m-1$ 
with least period $m$. Since $f$ is an $N$-to-1 endomorphism, 
any point $(x_i,\,y_i)$ has $N$ pre-images. To have a periodic orbit, we
take $f^{-1}((x_i,\,y_i))$ to be $(x_{i-1},\, y_{i-1})$ for $1\leq i\leq m-1$ 
and $f^{-1}((x_0,\,y_0))=(x_{m-1},\, y_{m-1})$. 
Other pre-images of $f$ do not form a periodic orbit.
Let $\varsigma((x_0,\,y_0))=p=(\overline{p_0,\, p_1,\, \ldots,\,p_{m-1}})$, $p_i\in\mathcal{A}'$. 
Then for $0\leq i\leq m-1$, $\varsigma((x_i,\,y_i))=p^i=(\overline{p_i,\, p_{i+1},\, \ldots,\,p_{i+m-1}})$ where the indices  are taken  modulo by $m$.
Similarly, $p^i$ has $N$ pre-images.

Any point $\varsigma(x_i,\,y_i)=(\ldots,\varphi_1(p_{i-1});\, p_i,\ldots,p_{m-1},\,p_0,\,p_1,\ldots)$ 
has $N$ pre-images as $(\ldots,\varphi_1(p_{i-2})\, ;t,\, p_i,\ldots,p_{m-1},\,p_0,\,p_1,\ldots)$ with $t\in \varphi_1^{-1}(p_{i-1})$  for $1\leq i\leq m-1$. 
To have the periodic orbit  select $t$ to be $p_{i-1}$. Then  
$$(\ldots,\varphi_1(p_{i-2}); \, p_{i-1},\,  p_i, \ldots, p_{m-1},\,p_0,\,p_1,\ldots)$$
coincides with the code of that pre-image of $f^{-1}(x_i,\,y_i)$ which belongs to the set $\{(x_0,\,y_0),\ldots,(x_{m-1},\,y_{m-1})\}$.
None of the other pre-images of $p^i$ are periodic.
Let $\textrm{Per}_m(f)\subset \Lambda$ and $\textrm{Per}_m(\sigma)\subset \Sigma_{\mathcal{A},\mathcal{A}'}$ be the set of all periodic points  with least period $m$
under $f$ and $\sigma$ respectively. Obviously, $|\textrm{Per}_m(\sigma)|=(2N)^m$ and the 
set $\textrm{Per}(\sigma)=\cup_{m\geq 0}\textrm{Per}_m(\sigma)$ is dense in $\Sigma_{\mathcal{A},\mathcal{A}'}$.
According to the Theorem \ref{th3.1}, the same result holds for $f$ over $\Lambda$.

\subsubsection{Stable and unstable sets}

Consider a point $S_0=(x_0,\, y_0)\in \Lambda$. As mentioned in Subsection \ref{Conj}, it lies on the intersection of line segments 
$h_{y_0}$ and   $v_{x_0}$ passing through $x_{0}$ and $y_{0}$ respectively.

In the sense of symbolic dynamics,  let 
$\varsigma(S_0)=s=(\ldots,s_{-2},\, s_{-1};\, s_0,\, s_{1}, \ldots)$. 
Therefore, by Definition \ref{SU}, $t=(\ldots,t_{-2},\, t_{-1};\, t_0,\, t_{1}, \ldots)\in W^s(s)$ if there exists 
an integer $N'(t)\geq 0$
with $t_n=s_n$ for $n\geq N'(T)$. Also, $r=(\ldots,r_{-2},\, r_{-1};\, r_0, \ldots)\in W^u(s)$  if  there exists an integer $N(r)>0$
with $r_{-n}=s_{-n}$ for $n\geq N(r)$.  In fact since the zip shift space representing the $N$-to-1 horseshoe is a full zip shift, for any letter in 
$\mathcal{A}=\{a,\,b\}$, $\varphi^{-1}\{a\}$ or $\varphi^{-1}\{b\}$ can choose any symbol in $Z$ or $O$ without restrictions respectively. 
For non full zip shift cases, adjacent entries impose restrictions on the choice of symbols from  $Z$ or $O$  
to avoid the occurrence of forbidden words. Therefore in full zip shift spaces, the statement 
$d^{\pm}((\sigma^{-n}(r)),\, (\sigma^{-n}(s)))=0,n\rightarrow +\infty$ is equivalent to $r_{-n}=s_{-n}$ for $n\geq N(r)$. 

Consider the vertical line segment $v_{x_0}$. All points $(x_0,\,y)$ on $v_{x_0}\cap \Lambda$ have the code 
$\varsigma((x_0,\,y))\in \{t=(\ldots,t_{-2},\, t_{-1};\, t_0,\, t_{1}, \ldots), t_n=s_n, n \geq 0 \}\subset W^s(s)$. 
These  points  have the same forward code. If $T\in v_{x_0}$, then $d(f^n(S_0),\, f^n(T))\rightarrow 0$ when $n\rightarrow +\infty$.
Therefore, for $S_0\in v_{x_0}\cap \Lambda$, we define
\begin{equation}\label{H}
	\begin{aligned}
		W^s_{\textrm{special}}(S_0) &:= v_{x_0} \\
		&\supset \{ T \in \mathbf{S} : \varsigma(T) = (\ldots, t_{-2}, t_{-1}; t_0, t_1, \ldots), \ t_n = s_n, \ \mathbf{n} \geq \mathbf{0} \}, \\
		W^s_{\textrm{global}}(S_0) &:= \left\{ T \in \bigcup_{k \geq 0} f^{-k}(v_{x_0}) : k \in \mathbb{N} \cup \{0\} \right\} \\
		&\supset \{ T \in \mathbb{R}^2 : \varsigma(T) \in W^s_{\textrm{global}}(s) \}.
	\end{aligned}
\end{equation}
For $T\in W^s_{\textrm{global}}(S_0)$, there exists $N'=N'(T)\geq 0$ such that for $n\geq N'$, $d(f^n(S_0),\, f^n(T))\rightarrow 0$ when $n\rightarrow +\infty$. 

For the horizontal line segment $h_{y_0}$, all points $(x,\,y_0)$ on $h_{y_0}\cap \Lambda$ have the code 
$\varsigma((x,\,y_0))\in \{r=(\ldots, r_{-2},\, r_{-1};\, r_0,\, r_{1}, \ldots),\ r_{-n}=s_{-n},\, n>0\}\subset W^u_{\textrm{special}}(s)$. 
Such points  have the same backward code. Using the notations of the Subsection \ref{Conj}, the point $S_0=(x_0,\,y_0)$ on $h_{y_0}$ has $N$ 
pre-images denoted by $\{(x^i_{-1},\,y_{-1}): 1\leq i\leq N\}$. 
All these points lie on
the unique horizontal line segmant $h_{y_{-1}}$ but on different stable line segments passing through $x^i_{-1}$. 
In fact,  $f^{-1}(h_{y_0})\subset h_{y_{-1}}$. Therefore, the unstable direction of all points on $f^{-1}(y_0)$ overlay. We define 
\begin{equation}\label{H}
	\begin{aligned}
		W^u_{\textrm{special}}(S_0) &= h_{y_0} \\
		&\supset \{ R \in \mathbf{S} : \varsigma(R) = (\ldots, r_{-2}, r_{-1}; r_0, r_1, \ldots), \ r_{-n} = s_{-n}, \ \mathbf{i} > \mathbf{0} \}, \\
		W^u_{\textrm{global}}(S_0) &= \{ R \in f^{N}(h_{y_0}) : N \in \mathbb{N} \} \\
		&\supset \{ R \in \mathbb{R}^2 : \varsigma(R) \in W^u(s) \}.
	\end{aligned}
\end{equation}
If $R\in W^u_{\textrm{global}}(S_0)$, then there exits $N=N(R)>0$ such than the $d^{\pm}$ distance between  the set of points $f^{-N}(S_0)$ 
and the set of points   $f^{-N}(R)$ tends to zero  as $n\rightarrow +\infty$.

The unstable curve and stable curves can also be determined by symbolic labels. 
As mentioned before, $f^{-1}(\mathbf{S})$ is consisted of $N$ vertical strips and in all of them, the rectangle $V_{1^i}$ is
followed by $V_{0^i}$ for $1\leq i\leq N$. This gives a rule denoted  by 
\begin{equation}\label{s1}
0_1\rightarrow 1_1\quad 0_2\rightarrow 1_2, \ldots, 0_N\rightarrow 1_N.
\end{equation} 
For simplicity, take $N=2$. As in Figures \ref{2} and \ref{3},  in the first step, the rectangle $V_1$ is followed by $V_0$ and also ${V}_{1'}$ is
followed by ${V}_{0'}$. Therefore,
\begin{equation}\label{s1}
0\rightarrow 1\quad\quad 0'\rightarrow 1'.
\end{equation} 

In the second backward iterate, $f^{-2}(\mathbf{S})$ is consisted of 4 vertical strips. 
By following the labels of the rectangles in each strip, the rules in \eqref{s1} extend to
\begin{equation}\label{s2}
\begin{tabular}{rcl}
	$ 00\rightarrow 10\rightarrow 11\rightarrow 01$ &$\quad $& $0'0\rightarrow 1'0\rightarrow 1'1\rightarrow 0'1$\\
	$00'\rightarrow 10'\rightarrow 11'\rightarrow 01'$ &$\quad$ & $0'0'\rightarrow 1'0'\rightarrow 1'1'\rightarrow 0'1'$.
\end{tabular}  
\end{equation} 
Note that the first item in (\ref{s2}) means that the rectangles $V_{00}$, $V_{10}$, $V_{11}$  and $V_{01}$ are followed one after another
in $f^{-2}(\mathbf{S})$. 
The index $ij$ in $V_{ij}$ means that the rectangle $V_j$ has stretched  vertically into the rectangle $V_i$ under $f^{-1}$. 
In fact, the strings in \eqref{s1} produce the stable strings in \eqref{s2}. For example, the strings $ 00\rightarrow 10\rightarrow 11\rightarrow 01$ 
and $0'0\rightarrow 1'0\rightarrow 1'1\rightarrow 0'1$ are constructed 
from $0\rightarrow 1$ in (\ref{s1}) in the following sense. 

The first line in (\ref{s1}) means $V_0$ has stretched into $V_0$ and $V_1$ to create $V_{00}$ followed by $V_{10}$ and then $V_1$ 
has stretched in $V_1$ and $V_0$ to create $V_{11}$ followed by $V_{01}$ (Note that the order of rectangles $V_0$ and $V_1$ is changed when $V_1$ is stretching  into them). 
Similarly, $V_0$  has stretched in $V_{0'}$ and $V_{1'}$.
Afterwards, $V_1$  has stretched in $V_{1'}$ and $V_{0'}$ to form the connected strip of rectangles 
$V_{0'0}$, $V_{1'0}$, $V_{1'1}$ and $V_{0'1}$. 
Therefore, $0\rightarrow 1$ turns to two sequences. Each sequence has 4 words $t={t'}_1{t'}_2$ of length two. The letters ${t'}_2$ in the first (resp. second) two words 
in both sequences equal to 0 (resp. 1) which means $V_0$ (resp. $V_1$) in $0\rightarrow 1$ has been stretched. Because  of the way the horseshoe is constructed, 
the first two letters ${t'}_1$ would be $0$ then $1$ (resp. $0'$ then $1'$) and the second two letters ${t'}_1$ would be $1$ then $0$ (resp. $1'$ then $0'$).
Likewise, the two other strings in \eqref{s2} are formed. 

In the $k$th step, there are $2^k$ disjoint stable strings of length $2^k$ and its entries belong to $\beta_k'$. 
If a word $w=s_1's_2'\ldots s_k'\in \beta_k'$ is given, we can simply find all of the vertical rectangles which are connected 
to $V_{s'_1\ldots\,s'_k}$ in the stabel strip containing $V_w$ in $f^{-k}(\mathbf{S})$. 
To do this consider a sequence containing $2^k$ entries of words such as  $t={t'}_1\ldots {t'}_k$. The words in this sequence resemble the labels  of a sequence of
connected vertical rectangles in the stable strip containing the rectangle with lable $s'_1 s'_2 \ldots s'_k$.
If $s'_k \in \{0,\,1\}$, then the first $2^{k-1}$ words get $0$ for $t'_k$ and the other words get $1$ for $t'_k$.  If $s'_k \in \{0',\,1'\}$, simply replace $0$ and $1$ by 
$0'$ and $1'$. Continuing this procedure, for any $i\in \{1\ldots k\}$, if $s'_i \in \{0,\,1\}$, then the first $2^{i-1}$ words in 
the string get $0$ for $t'_i$, the  next $2\times 2^{i-1}$ words receive $1$, the  next  $2\times 2^{i-1}$ words receive $0$ and so on. The last $2^{i-1}$ words receive $0$ for $t'_i$.
If $s'_i \in \{0',\,1'\}$,  replace $0$ and $1$ by 
$0'$ and $1'$. 

For example let $w=s_1's_2' s_3'=10'1'$. So, $s'_1=1$, $s'_2=0'$ and $s'_3=1'$. 
The connected stable string to $w$ has length $2^3$ and its entries are in $\beta'_3$. 
Since $s'_3=1'$, the first $2^2$ words in the string receive $0'$ and the other words get $1'$ in their 3rd positions. 
To determine the letters in the second position of the words, since $s'_2=0'$, the first two words receive $0'$, 
the next four words get $1'$ and the last two words get $0'$. 
Finaly, $s'_1=1$. The first position of the words in the string should be 0, 1, 1, 0, 0, 1, 1 and 0 respectively. Therefore, the   connected stable string to $w$
is as follows:
$$ 00'0'\rightarrow 10'0'\rightarrow 11'0'\rightarrow 01'0' \rightarrow 01'1' \rightarrow 11'1'\rightarrow w=10'1'\rightarrow 00'1'. $$
Note that any word $w\in \beta'$ belongs  to a unique stable strip and can be used as the label of that strip.

Due to these strips, we define
\begin{equation}
	\begin{aligned}
		W^s(P) &= \{ R \in c_{x_0}^s : f^{km}(R) \subset v_{x_0}, \ k \geq 0 \}, \\
		W^u(P) &= \{ R \in c_{y_0}^u : f^{-km}(R) \subset h_{y_0}, \ k \geq 0 \}.
	\end{aligned}
	\label{eq:stable_unstable}
\end{equation}	
\subsubsection{Homoclinic and heteroclinic points and their orbits} Without loss of generality, suppose that $f$ is a 2-to-1 horseshoe map.
Let $P=(x_0,\,y_0)\in \Lambda$ be a periodic point with zip shift code $\varsigma(P)=p=(\overline{p_0,\,p_1,\ldots, p_{m-1}})$. 
By \eqref{H}, $P$ lies on the intersection of stable line segment 
$v_{x_0}\cap \Lambda$ and unstable line segment
$h_{y_0}\cap \Lambda$ consisting of points with coding $(\ldots; \overline{p_0,\,p_1,\ldots, p_{m-1}})$ and $(\varphi_1(\overline{p_0,\,p_1,\ldots, p_{m-1}});\ldots),$ respectively.
Therefore, any point $P'$ in  $W^s(P)\cap W^u(P)$  is a homoclinic point to $P$. 
If $\varsigma(P')=r\in \Sigma_{\mathcal{A},\mathcal{A}'}$, then there are natural numbers 
$N(r)>0$ and $N'(r)\geq 0$ such that $p_n=r_n$ for $n\geq N(r)$ and $p_{-n}=r_{-n}$ for $n\geq N'(r)$.
Let $P=(x_0,\,y_0)$ be a fixed point. 
For any $k\geq 0$, $f^k(P)=P$ and all the forward orbits of $P$ lie on $h_{y_0}$. 
The point $P$ has two pre-images under $f$, say $P$ and
$P'$ where $P'$ is a pre-fixed point. 
One of them passes through $P$
and the other passes through $P'$. After $k$ backward iterates, $2^k$ stable curves passes through the pre-images of $P$.

	Let  $H=(x,\,y)\in \Lambda$ be a homoclinic point to $P$.
	Denote by $h_i(P,\, H)$, the point in $i$th position in homoclinic orbit of $(x,\,y)$ to $P$ for $i\in \mathbb{Z}$. 
	Therefore, the homoclinic orbit of $(x,\,y)$ to $P$ is the set of points
	$$P\, \leftarrow \ldots, h_{-2}(P,\, H),\, h_{-1}(P,\, H),\, H,\, h_1(P,\, H),\, \ldots \rightarrow P.$$
	Let $u=\varsigma(H)=(\ldots, u_{-2}, u_{-1}; {u'}_{0}, {u'}_1, \ldots  )$. Then $u$ is a homoclinic point to $p$. Let $h_i(p,\,u)$ be the homoclinic orbit of $u$ to $p$.
	This orbit can be determined as the argument in Section \ref{long}.
		$\mathcal{A}=\{a,\, b\}$,  $\mathcal{B}=\{a\}$ and  $\mathcal{A}'=\{1,\, 2,\, 3,\, 4\}$ be three sets of alphabets. 
		Then $f$ is conjugate to $\sigma_f:\Sigma_{\mathcal{A},\mathcal{A}'}\rightarrow \Sigma_{\mathcal{A},\mathcal{A}'}$ with $\varphi_1(1)=\varphi_1(3)=a$ and 
		$\varphi_1(2)=\varphi_1(4)=b$. The map $\sigma_f$ illastrates the 2 to 1 feature of $f$. 
		The map $g$ is conjugate to $\sigma_g:\Sigma_{\mathcal{B},\mathcal{A}'}\rightarrow \Sigma_{\mathcal{B},\mathcal{A}'}$ with
		$\varphi_1(1)=\varphi_1(2)=\varphi_1(3)=\varphi_1(4)=a$ which shows that $g$ is a 4 to 1 map.

\begin{example}\label{eg3.5}
	Let  $\mathcal{A}'=\{0,\, 1,\, 0',\, 1'\}$, $\mathcal{A}=\{a,\, b\}$, $\varphi_1(0)=\varphi_1(0')=a$ and $\varphi_1(1)=\varphi_1(1')=b$ as in Figure \ref{3}.  
	Let $H$ be the homoclinic point to a periodic point $P$ with coding $\varsigma (P)=(\overline{01'})$ and $\varsigma (H)=h= (\overline{a,b},b,a; 1,\overline{0,1'})$. 
	Obviously, the orbit of $P$ has two stable directions. To obtain the backward orbit of $H$ to $P$ we use the second approach since $\Sigma_{\mathcal{A},\mathcal{A}'}$
	is a full zip shift. Let $\sigma^{-1}(h)\in\{h_1,h_2\}$ where $h_1=(\overline{a,b},b;0,1,\overline{1,0})$ and
	$h_2=(\overline{a,b},b;0',1,\overline{1,0})$. Also, 
	$\sigma^{-1}(h_1)\in\{h_{11},h_{12}\}$ and $\sigma^{-1}(h_2)\in\{h_{21},h_{22}\}$ where 
	$h_{11}=(\overline{a,b};1,0,1,\overline{1,0})$, $h_{12}=(\overline{a,b};1',0,1,\overline{1,0})$, $h_{21}=(\overline{a,b};1,0',1,\overline{1,0})$ 
	and $h_{22}=(\overline{a,b};1',0',1,\overline{1,0})$. 
	Now, since we have reached to the repeating part of the periodic point, the pre-images are determined according to the point $P$. 
	This means that we choose $\varphi_1^{-1}(a)$ and $\varphi_1^{-1}(b)$ to be $0$ and $1$ respectively. Therefore, $\sigma^{-1}(h_{ij})$ 
	would be $(\overline{b,a};1, \ldots)$ where the doted part is the entries of  non-negative indices  of $h_{ij}$, $i,j\in\{1,2\}$.
	The homoclinic point $H$ produces 4 homoclinic orbits. All of these orbits converges to $P$ in the past and future.
\end{example}	

	\bibliographystyle{amsplain}
	
	\end{document}